\newcommand{\BGW}{\operatorname{BGW}}
\newcommand{\tree}{\mathrm{Tree}}
\newcommand{\map}{\mathrm{Map}}
\definecolor{aleacolor}{rgb}{0.16,0.59,0.78}
\newcommand*\colvec[1]{
	\global\colveccount#1
	\begin{pmatrix}
		\colvecnext
	}
	\def\colvecnext#1{
		#1
		\global\advance\colveccount-1
		\ifnum\colveccount>0
		\\
		\expandafter\colvecnext
		\else
	\end{pmatrix}
	\fi
}
\newcommand{\south}{\ast_0}
\newcommand{\north}{\ast_1}
\newcommand{\out}{\operatorname{out}}
\newcommand{\ndN}{\mathbb{N}}
\newcommand{\SP}{\mathcal{SP}}
\renewcommand{\Pr}[1]{\mathbb{P}(#1)}
\newcommand{\one}{{\mathbbm{1}}}
\newcommand{\dis}{\operatorname{dis}} 
\newcommand{\convdis}{\,{\buildrel d \over \longrightarrow}\,}
\newcommand{\He}{\mathrm{H}}
\newcommand{\he}{\mathrm{h}}
\newcommand{\Di}{\mathrm{D}}
\newcommand{\cE}{\mathcal{E}}
\newcommand{\cG}{\mathcal{G}}
\newcommand{\cN}{\mathcal{N}}
\newcommand{\cP}{\mathcal{P}}
\newcommand{\cR}{\mathcal{R}}
\newcommand{\cS}{\mathcal{S}}
\newcommand{\cT}{\mathcal{T}}
\newcommand{\cU}{\mathcal{U}}
\newcommand{\Seq}{\textsc{SEQ}}
\newcommand{\mN}{\mathsf{N}}
\newcommand{\mP}{\mathsf{P}}
\newcommand{\mR}{\mathsf{R}}
\newcommand{\mS}{\mathsf{S}}
\newcommand{\mT}{\mathsf{T}}
\newcommand{\mSP}{\mathsf{SP}}
\newcommand{\oo}{\infty}
\newtheorem{theorem}{Theorem}[section]
\newtheorem{corollary}[theorem]{Corollary}
\newtheorem{proposition}[theorem]{Proposition}
\newtheorem{lemma}[theorem]{Lemma}
\newtheorem{definition}[theorem]{Definition}
\newtheorem{remark}[theorem]{Remark}
\newtheorem{example}[theorem]{Example}
\numberwithin{equation}{section}
\title[The scaling limit of random 2-connected series-parallel maps]
{The scaling limit of random 2-connected series-parallel maps}
\author[Amankwah]{Daniel Amankwah}
\email{daa33@hi.is}
\author[Björnberg]{Jakob Björnberg}
\email{jakob.bjornberg@gu.se}
\author[Stef\'ansson]{Sigurdur \"Orn Stef\'ansson}
\email{sigurdur@hi.is}
\author[Stufler]{Benedikt Stufler}
\email{benedikt.stufler@tuwien.ac.at}
\author[Turunen]{Joonas Turunen}
\email{joonas.turunen@alumni.helsinki.fi}
\begin{document}

\vspace {-0.5cm}

\maketitle

\begin{abstract}
A finite graph embedded in the plane is called a
series-parallel map if it can be obtained from a
finite tree by repeatedly subdividing and doubling
edges. We study the scaling limit of weighted random
two-connected series-parallel maps with $n$ edges and
show that under some integrability conditions on these
weights, the maps with distances rescaled by a factor
$n^{-1/2}$ converge to a constant multiple of Aldous'
continuum random tree (CRT) in the Gromov--Hausdorff
sense. The proof relies on a bijection between a set
of trees with $n$ leaves and a set of series-parallel
maps with $n$ edges, which enables one to compare
geodesics in the maps and in the corresponding trees
via a Markov chain argument introduced by Curien, Haas
and Kortchemski (2015).  
\end{abstract}
	
	\maketitle

	\begin{figure}
		\begin{subfigure}{0.25\textwidth}
			\centering
			\includegraphics[width=\linewidth]{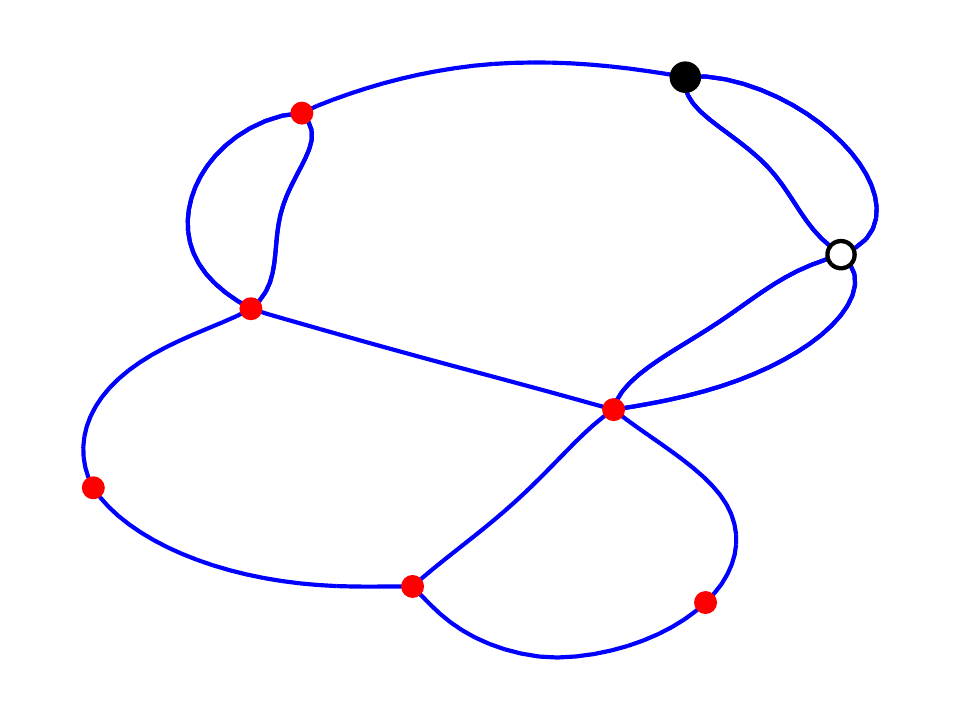}
			\caption{13 edges.}
		\end{subfigure}%
		\hfill
		\begin{subfigure}{0.36\textwidth}
			\centering
			\includegraphics[width=\linewidth]{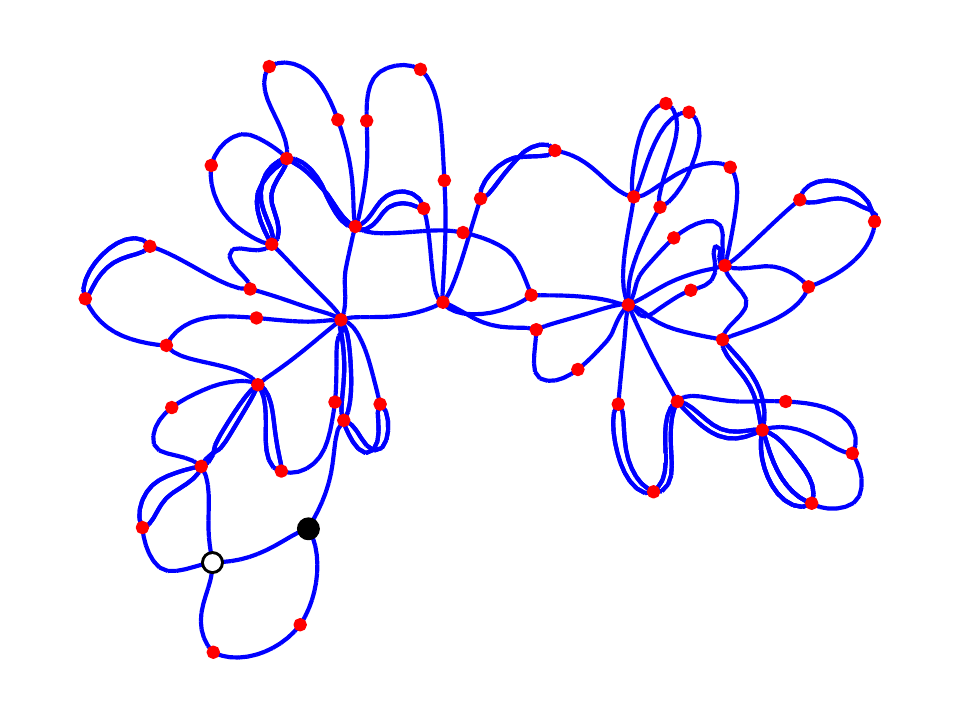}
			\caption{102 edges.}
		\end{subfigure}%
		\hfill
		\begin{subfigure}{0.39\textwidth}
			\centering
			\includegraphics[width=\linewidth]{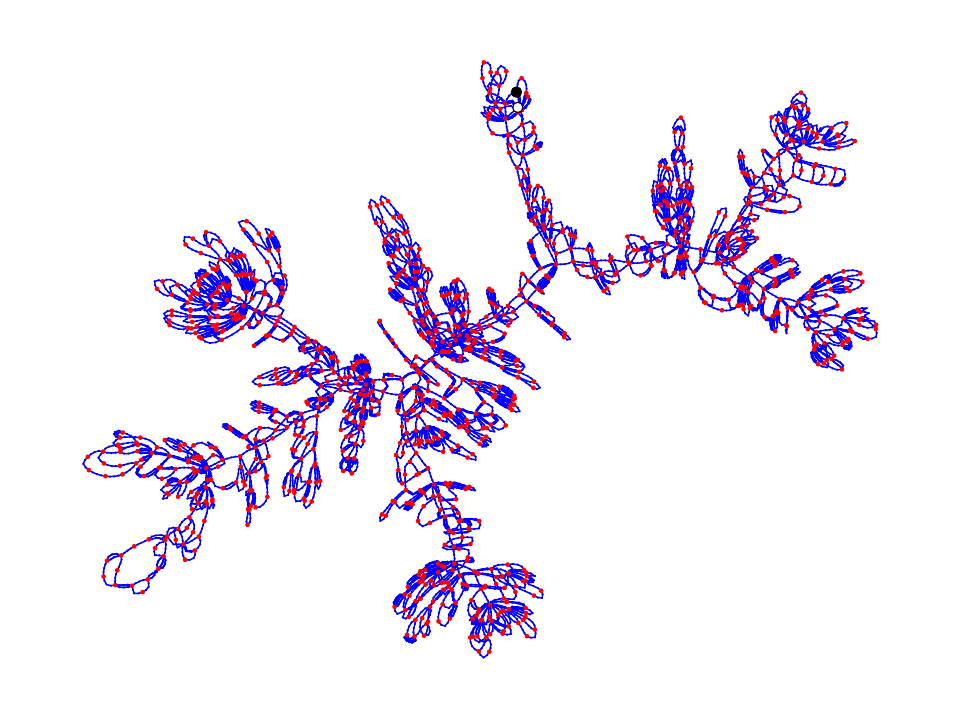}
			\caption{2352 edges.}
		\end{subfigure}
		\caption{A simulation of uniform 2-connected series-parallel maps with different numbers of edges. The endpoints of the root edge are represented by $\bullet$ and $\circ$.}\label{f:sim}
	\end{figure}

\section{Introduction}\label{sec:intro}

A finite connected graph is called a \emph{series-parallel graph}
if it can be obtained from a finite tree by repeatedly subdividing or
doubling edges.   It may equivalently be characterized as a graph that does not contain the complete graph $K_{4}$  as a minor. In this work we consider
series-parallel \emph{maps}, which are
series-parallel graphs embedded in the plane,
viewed up to continuous deformations.

Specifically, we consider series-parallel
maps which are \emph{two-connected}, and we
are interested in their \emph{scaling limits}. 
Without the  condition of two-connectivity,
uniformly sampled series-parallel maps have
Aldous' continuum
random tree (CRT) as their scaling limit, as
easily follows from a general theorem of
Stufler \cite[Thm.~6.62]{stufler:2020}.
More preciesely, uniform samples of such  maps with 
$n$ edges, and distances  rescaled by $n^{-1/2}$,
converge weakly in the Gromov--Hausdorff sense
to a multiple of the CRT. 
The essence of the proof lies in decomposing the map into
two-connected components, or \emph{blocks}, and using
a coupling with simply
generated trees where each block of the
map corresponds to a vertex in the tree. As the tree scales at the
order $n^{1/2}$, and
since blocks are typically small, after rescaling distances by
$n^{-1/2}$  on average each block
contributes a constant length to the geodesic.  Thus, the global shape of
the rescaled map is approximated by the tree, stretched by a constant factor. 

In this work, however, we would like to understand the structure of
the blocks themselves.
Thus, we focus our attention on \emph{two-connected} series-parallel
maps.  These do not enjoy the same type of coupling to simply generated
trees and prove more difficult to handle. Our main result is that,
nevertheless, uniformly sampled two-connected
series-parallel maps with $n$ edges converge, as $n\to\oo$ with
distances rescaled by $n^{-1/2}$,  
to a multiple of the CRT.  Figure \ref{f:sim} shows simulations of
these uniformly sampled maps and suggests that they become tree-like
with an increasing number of edges.

Analogous results have been established before for several tree-like
random maps, which can often be characterized through excluded
minors. 
Most fundamental is the original result of Aldous, showing that
uniform trees with $n$ edges converge to the CRT when rescaled by
$n^{-1/2}$ \cite{Aldous1,Aldous2,Aldous3}.   
Trees may be characterized by not containing $K_3$ as a minor.
Other notable cases
are various classes of {outerplanar graphs}. 
A graph is called \emph{outerplanar} if it may be
embedded in the plane so that every vertex lies on the boundary of
the external face;   equivalently, they are graphs  not containing $K_4$ or the
complete bipartite graph $K_{3,2}$ as minors. In \cite{caraceni},
Caraceni established that uniformly sampled outerplanar maps (properly
rescaled) converge to the CRT. Stufler  \cite{stufler:2020}
generalized this result to a family of face-weighted outerplanar maps,
using the aforementioned coupling with simply generated trees. 
Note that outerplanar maps are series-parallel maps,
since series-parallel maps have 
$K_4$ as an excluded minor, and outerplanar maps 
further exclude  $K_{3,2}$ as a minor.

If one additionally imposes the condition of two-connectivity
on outerplanar maps, 
one obtains \emph{dissections of a polygon}.  
Random, face weighted dissections of polygons were
studied 
by Curien, Haas and Kortchemski \cite{randissection} who showed, under
a suitable condition on the weights, that the scaling limit is a
constant multiple of the CRT. Their proof involves relating the
dissections bijectively to trees and using a clever Markov chain
argument to compare lengths of geodesics in the dissections and their
corresponding trees.

One of the main aims of the above studies is to understand how
universally the CRT appears as a scaling limit for different models of
random graphs or maps. 
Note that planar maps themselves,
given by excluding $K_5$ and $K_{3,3}$ as minors, 
behave vastly differently.  Uniform planar maps with $n$ edges, when
rescaled by $n^{-1/4}$, admit the so-called \emph{Brownian sphere} as
a scaling limit \cite{bettinelli:2014} which is different from the
CRT.    See Table \ref{tab:my_table} for a summary of these results.  
\begin{table}[htbp]
\centering
{\footnotesize
\begin{tabular}{cccc}
  \toprule
  Map & Excluded minors & Diameter &  Scaling limit \\
  \midrule
  Plane trees & $K_3$ & $n^{1/2}$   & CRT \\
  Outerplanar maps  & $K_{4}$, $K_{2,3}$ & $n^{1/2}$  &  CRT \\
  Series-parallel maps & $K_4$ &$n^{1/2}$  & CRT \\
  Planar maps & $K_5, K_{3,3}$ & $n^{1/4}$ &   Brownian sphere \\
  \bottomrule
\end{tabular}
\caption{Scaling limits of uniform maps with $n$ edges characterized
  by exclusion of minors.} \label{tab:my_table}
}
\end{table}

The proof of our main scaling limit result 
follows the strategy developed by Curien, Haas and
Kortchemski  for random dissections 
\cite{randissection}.   We use a bijection
with certain plane trees (see Section \ref{ss:bijection} for details), 
which enables us
to describe the distribution of the random maps with $n$ edges in
terms of a Bienaymé--Galton--Watson (BGW) tree conditioned on having $n$
leaves. We then adapt the Markov chain argument from
\cite{randissection} for comparing geodesics in the map 
and in the corresponding tree. 

Moreover, the relation with BGW-trees allows us
to consider a weighted version of the random maps in a natural way by
assigning a general offspring distribution $\mu$ to the BGW-trees. 
We will assume that this offspring distribution is critical and that it
has finite exponential moments  (is light-tailed), which includes the uniform
distribution as a special case. The precise definition of the weighted
series-parallel maps is given in Definition \ref{def:randommap} in
Section \ref{ss:weights}. 

The version of CRT used in this work is
the one constructed from a normalised Brownian excursion $\mathbf{e}$
(see \cite{LeGall1}) denoted by $\mathcal{T}_\mathbf{e}$. 
Let $\mSP^\mu_n$ denote the random, weighted, rooted, $2$-connected
series-parallel map with $n$ non-root edges corresponding to a
critical offspring distribution $\mu$ of a BGW-process possessing
finite exponential moments (see Definition \ref{def:randommap}).  
The following  is
our main result: 

\medskip

\begin{theorem}
	\label{te:main1}
	There exists a constant $c_{\mu}>0$ such that
	\[
	(\mSP^\mu_n, c_{\mu} n^{-1/2} d_{\mSP_n}) \convdis (\cT_{\mathrm{e}}, d_{\cT_{\mathrm{e}}})
	\]
	in the Gromov--Hausdorff sense.
\end{theorem}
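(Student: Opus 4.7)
The plan is to follow the strategy of Curien, Haas and Kortchemski \cite{randissection}, adapting their Markov chain argument from dissections to the series--parallel setting via the bijection from Section~\ref{ss:bijection}. That bijection identifies $\mSP^\mu_n$ with a plane tree $T_n$ having $n$ leaves, and the weight scheme of Definition~\ref{def:randommap} is arranged precisely so that $T_n$ is distributed as a Bienaym\'e--Galton--Watson tree with offspring law $\mu$ conditioned on having $n$ leaves. Since $\mu$ is critical with finite exponential moments, classical scaling limit results for BGW trees conditioned on their number of leaves (in the spirit of Aldous, Kortchemski and Rizzolo) give
\[
(T_n,\, a_\mu n^{-1/2} d_{T_n}) \convdis (\cT_{\mathrm{e}}, d_{\cT_{\mathrm{e}}})
\]
in the Gromov--Hausdorff sense for some explicit $a_\mu > 0$.

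The heart of the proof is then to transfer this convergence from $T_n$ to $\mSP^\mu_n$ by comparing geodesics. The bijection yields a canonical correspondence $\Phi$ between vertices of the map and certain vertices of the coding tree, and the goal is to exhibit a constant $\lambda_\mu > 0$ such that
\[
n^{-1/2}\bigl(d_{\mSP_n}(u,v) - \lambda_\mu\, d_{T_n}(\Phi(u),\Phi(v))\bigr) \convp 0
\]
uniformly for typical pairs of vertices; together with the tree convergence this will yield the theorem with $c_\mu = a_\mu / \lambda_\mu$. To establish this comparison, one traces a geodesic in $\mSP^\mu_n$ through the recursive series--parallel decomposition encoded by $T_n$: at each \emph{series node} the geodesic must cross the entire concatenation, while at each \emph{parallel node} the geodesic may choose one of several branches and thus take a shortcut. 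The configuration seen by the geodesic as it descends along an ancestral line of $T_n$ can be recast as a Markov chain whose transitions depend only on local tree data. Ergodicity of this chain combined with a law of large numbers then delivers the linear relation above, with $\lambda_\mu$ computed as an integral against its invariant measure.

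The main obstacle lies in the Markov chain analysis itself. Unlike the dissection case, parallel composition in a series--parallel map freely creates multi-edges, so the state space of the chain must carefully encode \emph{which} branch of \emph{which} parallel node is currently being traversed and how many shortcut opportunities remain accessible further up the decomposition. Beyond identifying the correct chain, one must prove ergodicity with quantitative mixing estimates, verify that properties established under the unconditioned BGW measure carry over to the conditioning $\{T_n \text{ has } n \text{ leaves}\}$ via an absolute continuity argument along the spine of $T_n$, and control the diameter of $\mSP^\mu_n$ so that a vanishing fraction of atypical pairs does not spoil the uniform comparison. Assembling these ingredients in the Gromov--Hausdorff framework, using $\Phi$ together with its approximate inverse as a correspondence, then yields the desired convergence.
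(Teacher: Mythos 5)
Your overall strategy coincides with the paper's: both transfer the known scaling limit of BGW trees conditioned on the number of leaves (Equation~\eqref{eq:igko}) to the map through the bijection $\varphi^\ast$, by comparing geodesics via a Markov-chain law of large numbers along the spine and then packaging the comparison as a Gromov--Hausdorff correspondence. However, two steps in your plan are left genuinely open. First, you never construct the decomposition that makes the Markov chain tractable. The paper cuts the coding tree into \emph{segments} at the $P$-labelled vertices having a leaf child, and correspondingly cuts the map into \emph{blobs} whose poles are joined by an edge of the map; a geodesic must pass through every such bottleneck along the ancestral line, so the only state that needs to be carried from one spine blob to the next is the difference $S_\ell\in\{-1,0,1\}$ of the distances to its two poles, giving a Markov additive process with a three-element driving chain (Lemma~\ref{le:x}) to which a standard large-deviation theorem applies. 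Your proposed state --- which branch of which parallel node is being traversed and how many shortcut opportunities remain accessible further up --- has no evident finite description, and you explicitly flag identifying the chain and proving its ergodicity as the main obstacle without resolving it; this is precisely the content that distinguishes the series-parallel setting from the dissection case and cannot be treated as routine.

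Second, your distance comparison is only claimed ``uniformly for typical pairs of vertices,'' with atypical pairs to be absorbed by a diameter bound. That is not sufficient for Gromov--Hausdorff convergence: the distortion of a correspondence is a supremum over \emph{all} pairs in the correspondence, and a single exceptional pair at scale $\sqrt{n}$ ruins it no matter how few such pairs there are. The paper instead proves (Lemma~\ref{le:heightbound}) that with probability tending to one there are \emph{no} bad vertices at all, via a union bound over heights combined with the spine-decomposition identity $\mathbb{P}\bigl((\hat{\mT}^{(\ell)},U_\ell)=(\tau,u)\bigr)=\mathbb{P}(\mT=\tau)$ and the exponential concentration supplied by Lemmas~\ref{le:fem}, \ref{le:x} and~\ref{le:z}. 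Your ``absolute continuity along the spine'' is the right tool, but it must be deployed to exclude all bad pairs, not merely to show they are rare.
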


\medskip

We also obtain optimal tail-bounds for the diameter $\Di(\mSP^\mu_n)$:

\medskip

\begin{theorem}
\label{te:main2}
There exist constants $C,c>0$ such that for all $n$ and all $x>0$
\[
  \Pr{\Di(\mSP^\mu_n) > x} \le C \exp(-c x^2 /n).
\]
\end{theorem}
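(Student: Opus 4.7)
The plan is to reduce the tail bound for $\Di(\mSP^\mu_n)$ to a Gaussian concentration estimate on the Bienaym\'e--Galton--Watson tree provided by the bijection of Section~\ref{ss:bijection}. Under this bijection, $\mSP^\mu_n$ is encoded by a BGW tree $\tau_n$ of offspring distribution $\mu$, conditioned on having exactly $n$ leaves; internal vertices of $\tau_n$ record the nested series/parallel decomposition of the map, and the offspring count at an internal vertex equals (up to convention) the size of the corresponding series cycle or parallel bond.

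The first step is a deterministic inequality of the form
\[
\Di(\mSP^\mu_n) \;\le\; C\,L(\tau_n), \qquad L(\tau_n) := \sup_{\gamma}\sum_{v\in\gamma}(1+k_v),
\]
where the supremum runs over all paths $\gamma$ between two vertices of $\tau_n$, and $k_v$ denotes the offspring count of $v$. Indeed, any geodesic in the map can be tracked through the series/parallel decomposition as a sequence of traversals of blocks, and the length of a single traversal is bounded by half the size of the block---that is, by the offspring count of the corresponding tree vertex---for series cycles, and by $1$ for parallel bonds.

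The second step is to establish $\Pr{L(\tau_n) > x} \le C e^{-c x^2/n}$. Because $\mu$ is critical with finite exponential moments, and since conditioning on $n$ leaves can be recast as conditioning on the size of a BGW tree with a modified light-tailed offspring distribution (by restriction to the internal vertices), classical estimates apply: the height satisfies $\Pr{\He(\tau_n)>h}\le C_1 e^{-c_1 h^2/n}$, and, conditionally on a path of length $h$, the sum of offspring counts along that path is a sum of $h$ independent light-tailed variables of mean $1$, which concentrates Gaussianly at scale $\sqrt{h}$. A union bound over pairs of leaves combines these two inputs into the claimed estimate for $L(\tau_n)$.

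The main obstacle is the deterministic comparison of Step~1: the presence of possibly long series cycles means that a single tree vertex $v$ can contribute $\Theta(k_v)$ to the map diameter, rather than $O(1)$ as in the dissection setting of \cite{randissection}. One must therefore verify that no greater contribution can arise from the interplay between adjacent blocks along a geodesic, so that the weighted path functional $L(\tau_n)$ indeed dominates the map diameter up to a constant. Once this purely combinatorial fact is isolated, the probabilistic estimates are standard and the final bound follows by absorbing lower-order terms into the constant $C$.
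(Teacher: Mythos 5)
Your Step~1 is where the argument breaks down: the deterministic inequality $\Di(\mSP^\mu_n)\le C\,L(\tau_n)$ with $L(\tau)=\sup_\gamma\sum_{v\in\gamma}(1+k_v)$ is false. The tree produced by the bijection of Section~\ref{ss:bijection} is the series--parallel decomposition tree, not a block tree: an internal vertex of outdegree $k$ does not correspond to a cycle of length $k$ in the map. When a geodesic passes through a series vertex $w$ from one child's network to another's, it must traverse the sibling networks lying in between, and each sibling contributes its \emph{pole-to-pole distance}, a quantity determined by the entire subtree below that sibling and not by $k_w$. Concretely, take the tree in which the root is labelled $S$, every vertex at depth $<2m$ has exactly two children, and all vertices at depth $2m$ are leaves (this lies in $\cT$ and has positive probability under $\BGW^\mu_n$ whenever $\mu(2)>0$). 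Writing $d(v)$ for the pole distance of the network at $v$, one has $d=1$ at the leaves, each parallel level preserves $d$ (a minimum of equal quantities) and each series level doubles it, so the map has pole distance $2^m=\sqrt{n}$ with $n=4^m$ leaves, while every path in the tree satisfies $\sum_{v\in\gamma}(1+k_v)=O(\log n)$. Hence no constant $C$ can work. This is exactly the point at which the present model departs from the dissections of Curien--Haas--Kortchemski, where each dual-tree vertex is a polygonal face crossable at cost half its degree; your ``series cycle'' picture implicitly imports that structure, which is not available here.

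The paper's proof takes a different route around this obstacle. After the elementary reduction $\Di(\mSP^\mu_n)\le 2\He(\mP_n)$, it bounds the map-height of a vertex not by degrees along the tree path but by the total \emph{size of the segments} (maximal subtrees delimited by $P$-vertices having a leaf child) attached along that path; Lemma~\ref{le:fem} shows these sizes are light-tailed and i.i.d.\ under the unconditioned law. The conditioning on $n$ leaves is then removed by the spine decomposition, via the identity $\mathbb{P}\bigl((\hat{\mT}^{(\ell)},U_\ell)=(\tau,u)\bigr)=\mathbb{P}(\mT=\tau)$, at the cost of a polynomial factor $O(n^{5/2})$, and the result follows from the height tail bound for $\mT_n$ together with a medium-deviation estimate for the sum of segment sizes along the spine. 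Your Step~2 has the same flavour but also glosses over the dependence issue: along a path of the \emph{conditioned} tree the offspring counts are neither independent nor $\mu$-distributed, and the size-biased spine construction is precisely what makes that part rigorous. To repair your argument you would need to replace $\sum_{v\in\gamma}(1+k_v)$ by a sum of segment sizes along $\gamma$ and then essentially reproduce the paper's Lemmas~\ref{le:fem}--\ref{le:z}.
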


\medskip

\begin {remark}
We will consider other closely related classes of series-parallel maps referred to as series-parallel networks, their
definitions are given in Subsection \ref{ss:spdef}. The two previous theorems also apply to random weighted series-parallel networks with the same scaling constant $c_\mu$.
\end {remark}

\section{Decomposing and sampling series-parallel maps} \label{s:setup}

\subsection{Rooted, two-connected 
series-parallel  maps} 
\label{ss:spdef} 

A planar map is a drawing of a finite
connected graph in the plane so that edges do not cross except
possibly at their endpoints, viewed up to orientation preserving
homeomorphisms of the plane. A series-parallel (SP) map is a planar
map which does not contain the complete graph $K_4$ as a minor.  The
objects of interest in this paper are two-connected SP-maps. Another
way to describe them is to start from a double edge and repeatedly
subdivide or double edges. 

We will assign a
direction to one of the edges of the map 
and refer to it as the \emph{root} of the map.
The root will be denoted by $e=(\ast_0,\ast_1)$. For concreteness, we
draw the maps  so that the root edge lies
on the boundary of the unbounded face and is directed in a clockwise
manner along that face. See Fig.~\ref{f:spexample} (left) for an
example. 
\begin{figure}[h]
	\centering
	\includegraphics[width=0.5\linewidth]{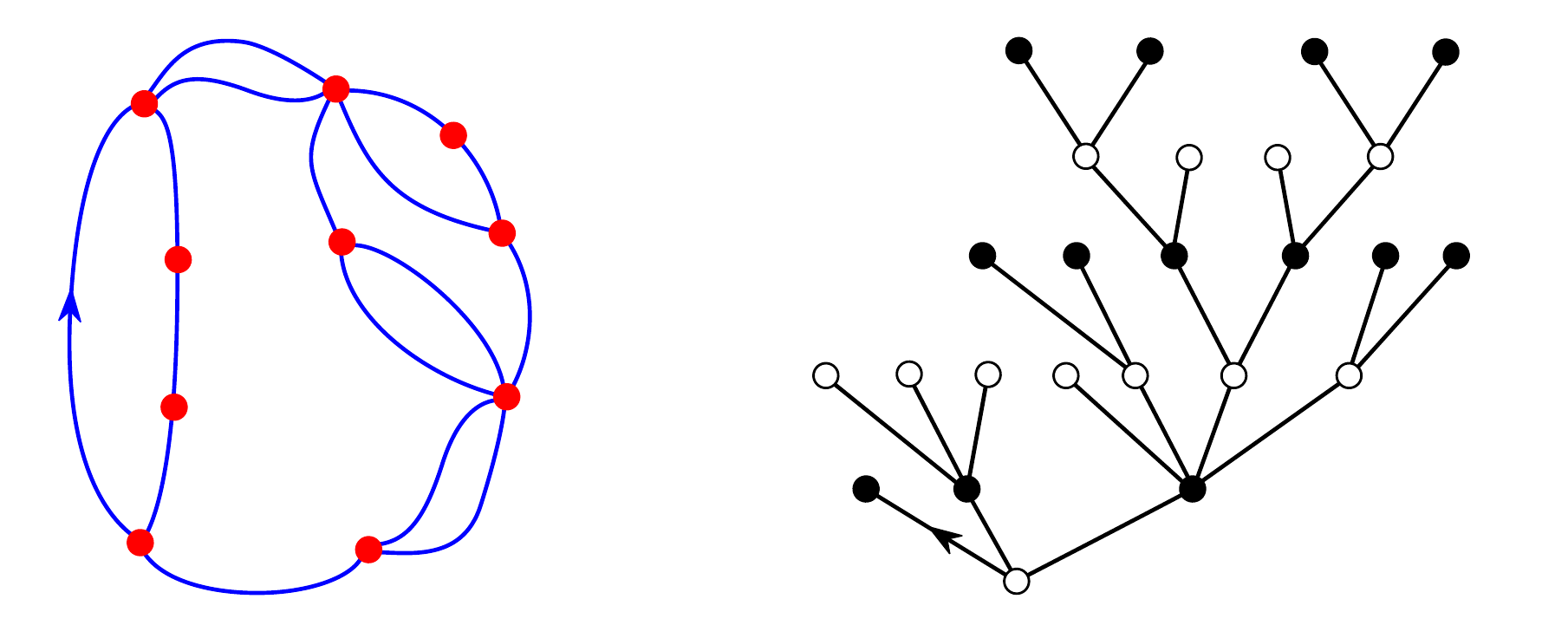}
	\caption{Left: An example of a rooted, two-connected SP-map. The directed root edge is indicated by an arrow. Right: Its corresponding labeled plane tree where $\circ$ denotes the label $P$ and $\bullet$ denotes the label $S$.}
	\label{f:spexample}
\end{figure}
Denote the set of rooted two-connected SP-maps with $n \geq 1$
non-root edges by $\SP_{n}$ and the set of all such finite maps by
$\SP=\cup_{n\geq 1} \SP_n$. We deliberately leave out the map with the
single root edge later on for notational convenience . 

In order to be
able to describe a decomposition of the maps into series and parallel
components, we require two additional definitions. Let $M \in \SP_{n}$
and let $M'$ be the map obtained by removing the root edge from
$M$, keeping the endpoints $\ast_0,\ast_1$ as marked vertices of $M'$. Define  
for all $n \geq 2$
\begin{align*}
  \cS_n &= \{M' ~|~ M \in \SP_{n}, M' \text{ is not two-connected}\}, \\
  \cP_n &= \{M' ~|~ M \in \SP_{n}, M' \text{ is two-connected} \},
\end{align*}
Also set 
\begin{equation}\label{eq def N}
\cS = \cup_{n\geq 2} \cS_n, \qquad 
\cP = \cup_{n\geq 2}\cP_n,\qquad
 \text{and} \qquad
\cN= \cS\cup \cP \cup\{e\}.
\end{equation}
The operation of removing or adding back the root edge defines a bijection 
\begin{align} \label{eq:removerootmap}
	\partial_\map: \SP \to \cN,
\end{align}
see Fig.~\ref{f:spremove}.
We refer to the  maps in $\cN$  as \emph{series-parallel (SP)
networks}. 

\begin{figure}[htb]
	\centering
	\includegraphics[width=0.85\linewidth]{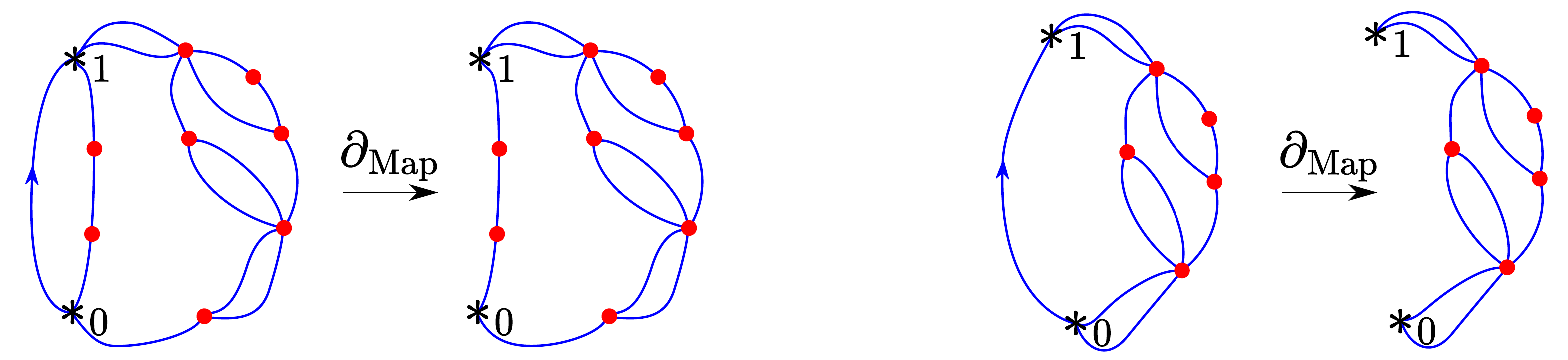}
	\caption{The root edge is removed from a rooted two-connected SP-map. The resulting map is either two-connected (left) or it is not (right).}
	\label{f:spremove}
\end{figure}

\subsection{Plane trees and labels}
\label{sec:planetrees}

 Recall the standard definition of the Ulam--Harris tree: set
$\mathcal{U}=\bigcup_{n=0}^{\infty} \mathbb{N}^{n},$
where
$\mathbb{N}=\{1,2,3,\dotsc\}$ and 
 $\mathbb{N}^{0}=\{\varnothing\}$ by convention.  
If $u=u^{1},\dots,u^{m}$ and $v=v^{1},\dots,v^{n}$ are
elements of $\mathcal{U}$, we write the concatenation of $u$ and $v$
as $uv=u^{1},\dots,u^{m},v^{1},\dots,v^{n}$.\\
\begin{definition}
A \emph{plane tree} $T$ is a finite or infinite subset of
$\mathcal{U}$ such that
\begin{enumerate}
\item $\varnothing \in T$,
\item if $ v \in T$ and $v=uj$ for some $j \in \mathbb{N}$ then $u \in T$.
\item for every $u \in T $, there exists an integer $\out_T(u) \geq 0$ (the outdegree or number of children of $u$) such that for every $j \in \mathbb{N}$, $uj \in T$ if and only if $1 \leq j \leq \out_T(u)$.  
\end{enumerate}
\end{definition}

\medskip

The element $\varnothing\in\cU$ is called the 
\emph{root vertex} and the pair
$\{\varnothing,1\}$ is called the \emph{root edge}. One may
view plane trees as planar maps with a directed root edge
which do not contain a cycle. A vertex of outdegree $0$ is called a
\textit{leaf} and an edge incident to a leaf is called a \textit{leaf
  edge}. The trivial tree consisting only of the root vertex
$\varnothing$, which is then a leaf, will be denoted by $\bullet$. 

We denote by $\cT$ the set of \emph{finite}
plane trees which have no vertices of
outdegree 1, and by $\cT_n\subseteq\cT$ the set of such trees
with $n\geq1$ leaves. 
We will apply labels $S$ and $P$ to the trees in $\cT$ 
(related to the series and
parallel decompositions of maps).  We use the notation
$\bar{S} = P$ and $\bar{P} = S$ when we need to swap labels. If
$(T,\ell)\in \cT \times \{S,P\}$ and $T\neq \bullet$ we say that $T$
is labeled by $\ell$ and we assign the label $\ell$ to vertices in
every even generation and $\bar \ell$ to vertices in every odd
generation. In the case when $T=\bullet$ we take $(\bullet,P)$ and
$(\bullet,S)$ to be equivalent and simply denote them by $\bullet$ and
say that $\bullet$ is neither labeled by $P$ nor $S$. Define $\cT\cP =
(\cT\setminus\{\bullet\})\times \{P\}$ and $\cT\cS =
(\cT\setminus\{\bullet\})\times \{S\}$ as the sets of nontrivial trees
labeled by $P$ and $S$, respectively, and let 
\begin{equation}\label{eq def TN}
\cT\cN := \cT\cP\cup
\cT\cS  \cup \{\bullet\}.
\end{equation} 
The corresponding sets of trees with $n$
leaves are denoted using the lower index $n$.  

Next define $\cT^\ast\subseteq\cT$ as the set of trees in
$\cT$ such that the leftmost
child of the root is a leaf, and let $\cT^\ast_n$ denote the set of
such trees with $n+1$ 
leaves (i.e.\ $n$ leaves not counting the leftmost child of the root). A tree $T\in \cT^\ast$ is commonly called a \emph{planted tree}. {We take trees in $\cT^\ast$ to be labelled so that vertices in even generations have label $P$ and vertices in odd generations have label $S$.
There is a useful bijection
\begin{align} \label{eq:removeroottree}
	\partial_{\tree}: \cT^\ast \to \cT{\cN} 
\end{align}
which is described as follows. Let $T\in \cT_{n}^\ast$. If the
outdegree of the root in $T$ is strictly greater that 2, remove the leftmost child
of the root (along with
the edge adjacent to it).  Note that the resulting tree is still labeled
by $P$, so this yields an element in $\cT\cP_n$. If the outdegree of
the root in $T$ equals 2, 
then remove
the root and its leftmost child along with their adjacent edges. The
resulting tree is labeled by $S$ and is thus an element in $\cT\cS_n$,
see Fig.~\ref{f:bijection_trees}. In the case when $T$ has exactly 1
leaf, let $\partial_\tree T = \bullet$. 
\begin{figure}[h]
	\centering
	\includegraphics[width=1\linewidth]{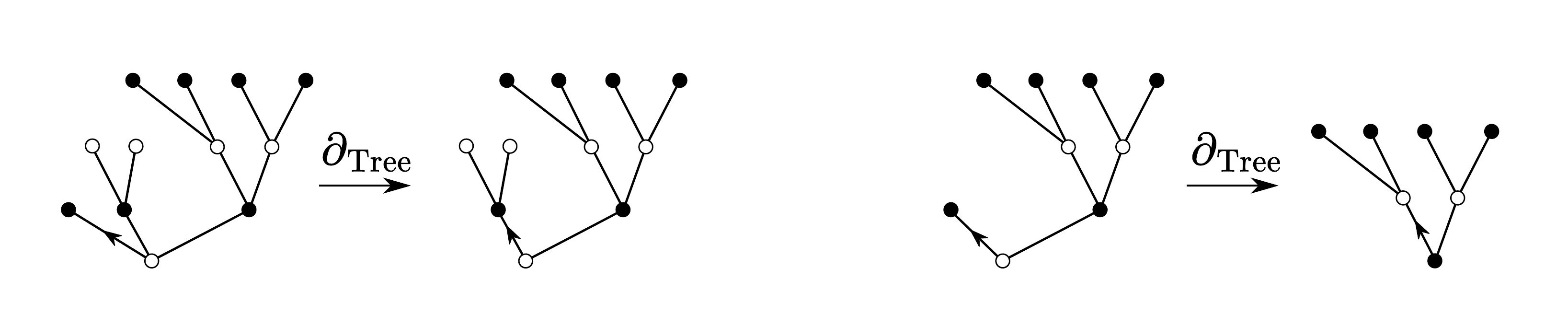}
	\caption{The removal of the first child of the root. Here, the vertices $\circ$ correspond to label $P$ and the vertices $\bullet$ to label $S$.}
	\label{f:bijection_trees}
\end{figure}

\subsection{Bijection between SP-maps 
and labeled trees}\label{ss:bijection}

Both SP-networks  $\mathcal N$
\eqref{eq def N}
and labelled trees   $\mathcal {TN}$
\eqref{eq def TN}
 may be described in the language of
combinatorial species \cite{Joyal: 1981}.
This provides a natural framework for defining 
bijections $\varphi:\cT\cN\to\cN$
and $\varphi^\ast: \cT^\ast\to\SP$, as we now describe.

First, an element of $\mathcal N$
  is either the single edge $e = (*_0,*_1)$,
or a parallel network or a series network: 
\begin{align} \label{eq:networkmain}
	\cN &=  e + \cP + \cS.
\end{align}
The elements of $\cP$ are called parallel networks and are constructed
from a parallel composition of at least two non-parallel networks: 
\begin{align} \label{eq:networkP}
	\cP &= \Seq_{\ge 2} (\cN - \cP)
\end{align}
where the combinatorial species $\Seq_{\geq 2}$ are the linear orders
of length greater than or equal to two.  The elements of $\cS$ are
called series networks and are constructed from a series composition
of at least two non-series networks: 
\begin{align} \label{eq:networkS}
	\cS &= \Seq_{\ge 2} ( \cN - \cS).
\end{align}
The single edge network is treated seperately in the above definitions
since it is in some sense neither series nor parallel.

Similarly, a tree from $\cT{\cN}$ is either the unlabeled single leaf
$\bullet$ or a tree labeled by $P$ or a tree labeled by $S$: 
\begin{align} \label{eq:treemain}
	\cT{\cN} &= \bullet + \cT\cP+ \cT\cS.
\end{align}
A tree labeled by $P$ is constructed by a composition of at least 2
trees from $\cT_{\cN}$ which are not labeled by $P$: 
\begin{align} \label{eq:treeP}
	\cT\cP &= \Seq_{\ge 2} (	\cT{\cN}  - 	\cT\cP).
\end{align}
A tree labeled by $S$ is constructed by a composition of at least 2
trees from $\cT_{\cN}$ which are not labeled by $S$: 
\begin{align} \label{eq:treeS}
	\cT\cS &= \Seq_{\ge 2} (\cT{\cN}  - \cT\cS).
\end{align}

The combinatorial identities \eqref{eq:networkmain},
\eqref{eq:networkP}, \eqref{eq:networkS} and \eqref{eq:treemain},
\eqref{eq:treeP}, \eqref{eq:treeS} naturally define a
bijection $$\varphi:\cT\cN\to\cN.$$ Moreover, networks with $n$ edges
are in bijection with the set of labeled trees with $n$ leaves and
networks in $\cP$ and $\cS$ are in bijection with labeled trees in
$\cT\cP$ and $\cT\cS$ respectively. 
Combining $\varphi$ with the bijections
\eqref{eq:removerootmap} and \eqref{eq:removeroottree} allows us to
define a bijection 
$\varphi^\ast: \cT^\ast\to\SP$
as in Fig.~\ref{f:diagram}. 
\begin{figure}[!h]
\[
\begin{tikzcd}
\cT^\ast \arrow{r}{\varphi^\ast} \arrow[swap]{d}{\partial_\tree} & 
\SP \arrow{d}{\partial_\map} \\
\cT\cN \arrow{r}{\varphi} & \cN
\end{tikzcd}
\]
\caption{Definition of $\varphi^\ast$ from $\varphi$.} \label{f:diagram}
\end{figure}

It will be useful to have the following recursive description of 
$\varphi^\ast$.  Given a labelled tree $(T,\ell)\in\cT\times\{S,P\}$,
first assign a single edge to each of the leaves.
Then, start forming  networks by combining the edges corresponding to
sibling leaves in series or in 
parallel, as determined by the label of their common parent.
Proceeding recursively towards the root of the tree,
each vertex corresponds to a series or a parallel network
which is constructed from the subtree of its descendants.
Networks corresponding to vertices with a common parent are
connected in series if that parent has label $S$, respectively 
 in parallel if it has label $P$.
To be definite, parallel connections  are made
in the same order left-to-right as the  order of the
corresponding siblings,  while series connections are made so that the
left-to-right order of the siblings gives the bottom-to-top
order of connections. 
Restricting this construction to $\cT^\ast$ gives the bijection
$\varphi^\ast$.

\subsection{Blobs.}

We now define what we  call \emph{blobs}.
 Informally, blobs are
 parts of $M$ 
 which are separated by bottlenecks, in the sense
that a geodesic between two vertices within the same blob stays
entirely within that blob.  See Fig.~\ref{f:spexample2} for an
illustration of the description that follows.
\begin{figure}[h]
  \centering
  \includegraphics[width=0.9\linewidth]{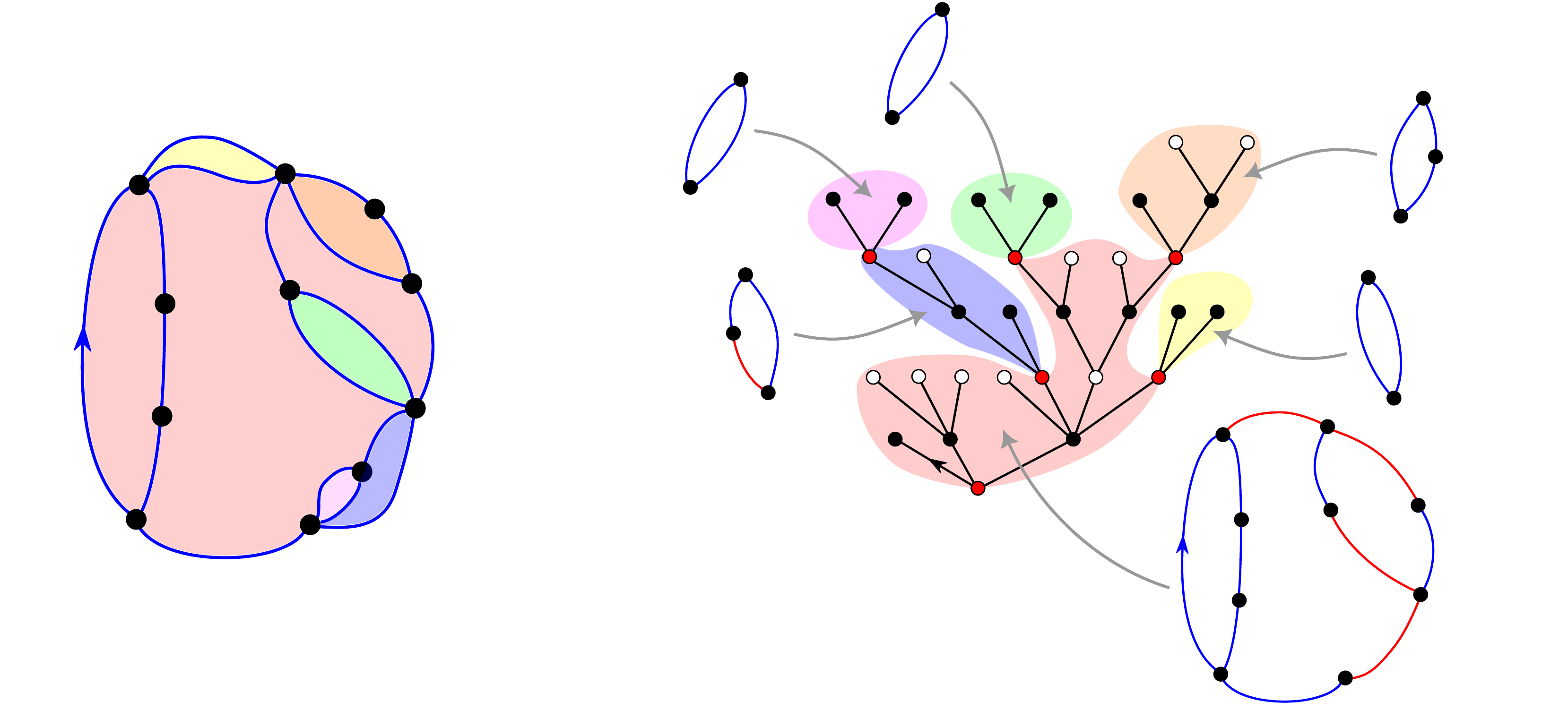}
  \caption{Left: An example of a rooted, two-connected SP-map divided
    into blobs. Right: Its corresponding planted plane tree, labeled
    by $P$, divided into segments. Here, $\circ$ and
    {\color{red}$\bullet$} denote the label $P$ and $\bullet$ denotes
    the label $S$. Each segment corresponds to a network and the red
    edges depicted in these networks are the places where networks
    corresponding to fringe subtrees are inserted.} 
	\label{f:spexample2}
\end{figure}

Suppose that we are given a rooted $2$-connected SP-map $M\in\cS\cP$
and the corresponding planted tree
$T^\ast=(\varphi^\ast)^{-1}(M)\in\cT^\ast$ (which we recall is
labeled by $P$).  
We colour a vertex of $T^\ast$ red if it is marked $P$ and has at
least one child that is a leaf. In particular, the root of $T^\ast$ is
coloured red. This way, the tree $T^\ast$ is decomposed into
(maximal) subtrees whose roots are red, and
 where any red non-root
vertex is a leaf.  We call these subtrees \textit{segments}. 

We define the \emph{blobs}  of $M$ to be the networks obtained by
applying the recursive construction defining $\varphi^\ast$ (described
at the end of Section \ref{ss:bijection}) to the segments.
We moreover declare the blobs to have an oriented red
placeholder edge for each red non-root vertex in the segment, which
marks the location where we would insert the network constructed from
the corresponding fringe subtree in $T^\ast$.

The orientation of each red edge is naturally inherited from the orientation of
the edge in the tree from the root to the red vertex, together with the orientation of the root edge of the SP-map, where the tree is explored in its usual lexicographic order. 
 We assign placeholder south and north
 poles to each of the blobs as the start- and endpoints of the
oriented edge that corresponds to the red edge assigned to the red leaf of the corresponding parent segment.
This way, the map $M$ gets decomposed into
blobs, where the placeholder poles of each blob are connected by
an edge in $M$.

 We perform a similar decomposition of networks $N$ into blobs,
 where all definitions are analogous except that we demand the root
 vertex of the corresponding tree $T$ to be coloured red.

\subsection{Bienaym\'e--Galton--Watson trees 
and simply generated trees}  

The main focus of this work is uniformly distributed
two-connected SP-maps. However, the bijective correspondence with trees
which is explained in the previous section allows us easily to
consider a more general measure on the maps by assigning weights to
the trees in a natural way and pushing these forward to the maps using
the bijection.

Let $\mu = (\mu(k))_{k\geq 0}$ be a sequence of probabilities with
$\mu(1) = 0$.  Let $\BGW^\mu$ denote the distribution of a
Bienaymé--Galton--Watson process with offspring distribution $\mu$. If
$\mu$ has mean less than or equal to $1$ then the tree $\tau$ is
almost surely finite and, for any finite tree $T$,
\begin{equation}
  \BGW^{\mu}(\tau=T) = \prod_{u \in T}\mu(\out_\tau(u)).
\end{equation}
Let $\BGW^{\mu}_n$ be the corresponding measure  conditioned on
the number of leaves being $n$.  The random trees distributed by
$\BGW^\mu$ and $\BGW^\mu_n$ will be denoted by $\mT^\mu$ and
$\mT^\mu_n$ respectively.

In order to relate the BGW-trees to the random maps, it will be useful
to consider another (almost) equivalent model of random trees called
\emph{simply generated trees with leaves as atoms}. Let $\eta =
(\eta(i))_{i\geq 0}$ be a sequence of non-negative numbers called
\emph{weights} and assume that $\eta(1) = 0$. Let $T\in \cT_n$ and
define the weight of $T$ by 
\begin{align*}
	\eta(T) = \prod_{u\in T} \eta(\out_T(u)).
\end{align*}
A \emph{simply generated tree} on $n$ leaves is a random plane tree $\tau$ with values in $\cT_n$ with a probability distribution
\begin{align} \label{simplygen}
	\mathbb{P}(\tau = T) = \frac{\eta(T)}{\sum_{T'\in \cT_n} \eta(T')}, \qquad T \in \cT_n.
\end{align}
The literature contains many results on simply generated trees with a
fixed number of vertices, and for these we refer to \cite{janson} for
details. The current case, where instead the number of leaves is fixed, is
considered in e.g. \cite{stufler:2020}.  Below we outline the
main results which we require.

For $a,b > 0$, let $\eta^{a,b}$ be another weight
sequence defined by  
\begin{align}\label{eq:tilt1}
	\eta^{a,b}(k)= \begin{cases}
		a \eta(0) & k = 0 \\
		b^{k-1} \eta(k) & k\geq 2.
	\end{cases}
\end{align}
These weights are equivalent to $\eta$ in the sense that they define
the same measure as in \eqref{simplygen}. Define the generating series \begin {align} \label{partitionfunction}
\qquad g^\eta(z) = \sum_{n=0}^\infty \eta(n) z^n
\end{align}
and denote its radius of convergence by $\rho_\eta$. We will assume
that the weights $\eta$ are chosen such that $\rho_\eta > 0$. Then
for any $b>0$ such that $\frac{g^\eta(b)-\eta(0)}{\eta(0)  b} < 1$
one may choose 
\begin{align*}
a = \frac{1}{\eta(0)}-\frac{g^\eta(b)-\eta(0)}{\eta(0) b}
\end{align*}
in which case $\eta^{a,b}$ is a probability measure which we will
denote by $\eta^{(b)}$. Denote its mean by 
\begin{align*}
\mathbf{m}_b = \sum_{k=2}^\infty k \eta^{(b)}(k) = \sum_{k=2}^\infty k\eta(k) b^{k-1} = (g^\eta)'(b).
\end{align*}
The mean is a strictly increasing function of $b$ and $\mathbf{m}_0=0$. In the current work we will assume that there is a $b<\rho_\eta$ such that $\mathbf{m}_b = 1$ which is then necessarily unique. The corresponding probability measure will be denoted by $\mu$ and the condition $\mathbf{m}_b = 1$ is referred to as \emph{criticality}. The condition $b<\rho_\eta$ furthermore guarantees that $\mu$ has finite exponential moments. It is straightforward to check that the simply generated tree with weights $\eta$ has the same distribution as $\mT^\mu_n$.

\subsection{Weighted trees and maps.} \label{ss:weights}

Let $\eta$ be a weight sequence such that there exists a corresponding critical offspring distribution $\mu$. Generate a random planted tree, denoted by $\mT_n^{\mu,\ast}$, as follows:
\begin{enumerate}
	\item Let $\mT_n^\mu$ have distribution $\BGW^\mu_n$.
	\item Flip a fair coin and label $\mT_n^\mu$ by $P$ or $S$ according to the outcome of the coin flip.
	\item Plant the resulting labeled tree, i.e.~set $\mT_n^{\mu,\ast} := \partial_\tree^{-1}\mT_n^\mu$.
\end{enumerate}

\begin{definition} \label{def:randommap}
The random, weighted, rooted, 2-connected SP-map with $n$ non-root edges is defined as
\begin{align*}
	\mSP_n^\mu := \varphi^\ast(\mT^{\mu,\ast}_n).
\end{align*}
The corresponding random network is denoted by 
\[
\mN_{n}^\mu = \partial_\map \mSP_n^\mu 
= \varphi(\mT_{n}^\mu).
\] 
The networks obtained by conditioning $\mN_n^\mu$ on being a series or a
parallel network (i.e.\ on the outcome of the coin flip above) are
denoted by $\mS_n^\mu$ and $\mP_n^\mu$ respectively. 
\end{definition}

In order to lighten the notation, we will often suppress the $\mu$ and
 write e.g.~$\mSP_n = \mSP^\mu_n$, $\mN_n = \mN^\mu_n$, $\mT =
\mT^\mu$, $\mT^\ast_n = \mT^{\mu,\ast}_n$ and $\mT_n = \mT^\mu_n$.

\medskip

\begin{example}\label{ex:uniform}
	The uniformly distributed map $\mSP_n$ is obtained by choosing $\eta(k) = 1$ for all $k\geq 0$, $k\neq 1$.  In this case one finds that
	\begin{align}
		\mu(0) &= 2-\sqrt{2}, \quad \mu_1 = 0, \quad \mu(k) = \left(1-\frac{1}{\sqrt{2}}\right)^{k-1}, \qquad k\geq 2. \label{eq:uniformprob}
	\end{align}
\end{example}

\begin{example}
Let $N\geq 2$ be an integer. The  map $\mSP_n$,
which is distributed uniformly among those 
where each series and each parallel composition in the network
$\mN_{n}$  consists of exactly $N$ components, is obtained by choosing
the weights $\eta(0) = \eta(N)$ and all other weights equal to 0. In
the corresponding labeled tree $\mT_{n}$, every vertex has outdegree 0
or $N$ and the root has outdegree 1 or $N$. In this case one finds
that 
$\mu(0) = 1-\frac{1}{N}$, $\mu(N) = \frac{1}{N}$.
\end{example}

\section{Proofs of the main results}

We now turn to the proofs of Theorems \ref{te:main1} and
\ref{te:main2}.  
The main method is to compare lengths of geodesics between vertices in
a series-parallel map and between corresponding vertices in its
associated tree. For concreteness, we will work with the random
parallel networks $\mP_n$. The associated tree is then a BGW-tree
$\mT_n=\mT^\mu_n$ labelled by $P$. The proof can easily be adapted to
the maps $\mS_n$, $\mN_n$ and $\mSP_n$ with exactly the same outcome.   

Let $\mT = \mT^\mu$ be an unconditioned BGW-tree and let $\xi$ be a
random variable with distribution $\mu$. 
We will use the \emph{local limit}  $\hat{\mT}$ 
of $\mT_n$ (as $n\to\infty$), which is an infinite random tree
with a unique half-infinite path starting
from the root, called its \emph{spine}.  In $\hat{\mT}$ non-spine vertices
have offspring according to  independent copies of $\xi$, whereas
spine vertices have offspring according to independent copies of
the size-biased random variable $\hat{\xi}$ with distribution
$\Pr{\hat{\xi}=k} = k \Pr{\xi = k}$, and the successor spine vertex
is chosen uniformly at random among these children. 
For further details, we refer to Janson's
extensive review \cite{janson}.  

For each $\ell \ge 1$,
let $\mT^{(\ell)}$ denote the  tree obtained by deleting
 all descendants of the $(\ell +1)$st spine vertex of $\hat{\mT}$
and identifying the $(\ell +1)$st spine vertex with the root of an
independent copy of~$\mT$.  We view all the random trees $\mT_n$,
$\mT$, $\hat{\mT}$, and $\hat{\mT}^{(\ell)}$ for $\ell \ge 1$ as
being coloured (i.e.\ some vertices are red) 
and labeled by $P$ or $S$  according to the fair coin
flip as defined in Section \ref{s:setup}. 

We may define infinite random series-parallel maps/networks $\mSP$,
$\mN$, $\mP$ and $\mS$ in a natural way by extending the functions
$\varphi$ and $\varphi^\ast$ to  infinite trees. The
maps/networks may then be viewed as the weak local limits of the
corresponding sequences of finite maps/networks. These infinite
objects are not strictly needed in the following arguments. However, it
is convenient to refer to them occasionally, so we provide some details
of their construction in the appendix. 
 
We may generate $\mT$ by
starting with a root segment  and a list of i.i.d.~non-root
segments. Likewise, $\hat{\mT}$ can be generated from a root segment,
a  list of i.i.d.~normal non-root segments, and a list of i.i.d.~spine
non-root segments. The tree $\hat{\mT}^{(\ell)}$ additionally has a mixed segment containing the tip of the spine.

\medskip

\begin{lemma}\label{le:fem}
The number of vertices in each type of segment (root, spine and
normal) has finite exponential moments. 
\end{lemma}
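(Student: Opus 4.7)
The plan is to analyze each of the three segment types via generating functions and the implicit function theorem, exploiting that a segment is a subtree of $\mT$ (or $\hat{\mT}$) terminating at red $P$-vertices and that the probability of being red is bounded away from zero.

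Writing $g(z) = \sum_k \mu(k) z^k$, I would introduce the generating functions $F_P(z) = \Ex{z^{W_P}}$ and $F_S(z) = \Ex{z^{W_S}}$, where $W_P$ (resp.\ $W_S$) is the size of a sub-segment descending from an unconditioned $P$- (resp.\ $S$-)labeled vertex in $\mT$. A direct recursion --- always continue through an $S$-vertex; at a $P$-vertex stop if it is a leaf or red, otherwise continue through its children, which are then conditioned on being non-leaf $S$-vertices --- yields the system
\begin{align*}
F_S(z) &= z\, g(F_P(z)),\\
F_P(z) &= z\bigl[1 - g(1 - \mu(0)) + g\bigl(z(g(F_P(z)) - \mu(0))\bigr)\bigr].
\end{align*}
Since segments are a.s.\ finite, $F_P(1) = F_S(1) = 1$. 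Writing $\Psi(z, w)$ for the right-hand side of the second equation as a function of $w = F_P(z)$, one computes $\partial_w \Psi(1, 1) = g'(1 - \mu(0))$. Criticality together with $\mu(1) = 0$ forces $\mu(0) > 0$ and $\mu(k) > 0$ for some $k \ge 2$, hence $g'$ is strictly increasing on $[0, 1]$ and $g'(1 - \mu(0)) < g'(1) = 1$. The implicit function theorem then gives an analytic continuation of $F_P$ (and hence of $F_S$) to a complex disk $|z| < 1 + \varepsilon$, so $W_P$ and $W_S$ have finite exponential moments.

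The size of a normal segment equals $1 + \sum_{c=1}^{K} W_{S, c}$, where $K$ is the offspring number of its red root --- a non-root $P$-vertex of $\mT$ conditioned to be red, a positive-probability event --- and the $W_{S, c}$ are i.i.d.\ copies of $W_S$. Its generating function works out to an analytic combination of $F_P$ and $F_S$ (with the conditioning absorbed in the positive denominator $1 - g(1-\mu(0))$), hence is analytic near $z = 1$; thus the normal-segment size has finite exponential moments. The root segment is handled identically, with the root of $\mT^\ast$ (automatically red because its leftmost child is a leaf) playing the role of the red root. For the spine segment, the spine vertex has offspring distributed as the size-biased $\hat\mu(k) = k\,\mu(k)$ rather than $\mu$, but since $\mu$ has finite exponential moments so does $\hat\mu$; substituting $\hat\mu$ for $\mu$ along the spine in the above set-up reproduces the same analyticity conclusion.

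The main technical subtlety is setting up the $F_P$ recursion correctly, keeping track of the coupling between the red/non-red condition and the $P$-vertex's offspring distribution (the $S$-children being conditioned on non-leaves when $g$ is non-red is what produces the inner argument $z(g(F_P(z)) - \mu(0))$). Once the recursion is in place, the strict inequality $g'(1 - \mu(0)) < 1$ --- the mechanism by which segments remain small --- follows immediately from criticality and the strict convexity of $g$, and the implicit function theorem does the rest.
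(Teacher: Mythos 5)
Your proof is correct, and it is built on exactly the same mechanism as the paper's, but it takes a genuinely different technical route. The paper collapses every other generation of a segment into an auxiliary BGW tree $\mT^\nu$ whose offspring generating function is $\nu(0)+h(h(z))$ with $h=f-\mu(0)$, observes that its mean is $f'(1-\mu(0))<1$ by criticality, and then cites Janson's Theorem 18.1 for light tails of the total progeny of a subcritical, light-tailed BGW process (bounding the segment size by twice the leaf count of $\mT^\nu$). You instead write the fixed-point equation for the size-generating function $F_P$ directly and invoke the analytic implicit function theorem together with Pringsheim's theorem; the decisive derivative $\partial_w\Psi(1,1)=g'(1-\mu(0))<1$ is precisely the paper's subcriticality constant, and your composed inner argument $g(z(g(F_P(z))-\mu(0)))$ is the unrolled form of the paper's $h\circ h$. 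What your route buys is self-containedness (no appeal to Janson) and control of the exact segment size rather than a factor-two bound; what the paper's route buys is brevity and reuse of a standard result. Two small imprecisions worth fixing: the children of a red root are \emph{not} i.i.d.\ copies of $W_S$ --- they are jointly conditioned on at least one being a leaf --- although your own unnormalized generating-function bookkeeping (subtracting the all-non-leaf term and dividing by $1-g(1-\mu(0))$) handles this correctly, so only the sentence, not the argument, needs repair; and the spine segment is not obtained by merely substituting $\hat\mu$ for $\mu$ --- its length along the spine is geometric because red-ness of a spine $P$-vertex depends only on its off-spine children, and the off-spine outgrowths contribute ordinary (unconditioned) $W$-sized sub-segments --- but the same analyticity conclusion does follow, and the paper is equally terse on this point. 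Finally, your use of $F_P(1)=1$ implicitly relies on segments being a.s.\ finite, which should be noted as a consequence of the a.s.\ finiteness of the critical tree $\mT$.
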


\medskip

\begin{proof}
  The number of vertices in each random segment may be stochastically
  bounded by two times the total population of a subcritical branching
  process (with possibly a modified root degree) whose offspring
  distribution is light-tailed. The total population of such a process
  is light-tailed itself by~\cite[Thm. 18.1]{janson}. 
	
	We detail this argument for the root segment of $\mT^\mu$. The proofs for all other types of segments are analogous. 
	We construct a BGW-tree $\mT^\nu$ with offspring probabilites $(\nu_n)_n$ from $\mT^\mu$ as follows. Let $\mT'$ be the root segment of $\mT^\mu$. The tree $\mT^\nu$ is defined as the tree with vertex set consisting of the vertices in even generations in  $\mT'$ and vertices in $\mT^\nu$ are connected by an edge if and only if one is the grandchild of the other in $\mT'$.  Note that since $\mT'$ has no vertices of outdegree 1, the number of vertices in $\mT'$ is less than two times the number of its leaves and $\mT^\nu$ has the same number of leaves as $\mT'$.  Therefore, the number of vertices in $\mT'$ is bounded by two times the number of leaves in $\mT^\nu$.

Another way to describe $\mT^\nu$ is the following.  Let $v$ be a vertex in $\mT^\mu$ at an even generation whose children we have not yet generated and assume that $v$ is also present in $\mT^\nu$. If $v$ either has no children in $\mT^\mu$ or it has more than one child and some of its children have no children, then we say that $v$ has no children in $\mT^\nu$. On the other hand, if $v$ has children in $\mT^\mu$ and all of its children also have children, then we add these grandchildren as the children of $v$ in $\mT^\nu$. From this description, we find that 
	\begin{align*}
		\nu(0) &= 1-\sum_{n=2}^\infty \mu(n) (1-\mu(0))^n, \\
		\nu(1) &= 0, \\
		\nu(k) &= \sum_{n=2}^\infty \mu(n) \sum_{i_1+\cdots+ i_n = k}\prod_{j=1}^n \mu(i_j)\mathbbm{1}\{i_j > 0\}, \quad k\geq 2.
	\end{align*}
	The generating function $g$ of the probabilities $(\nu_n)_n$ satisfies
	\begin{align}
		g(z) = \nu(0) + h(h(z))
	\end{align}
	where $h(z) = f(z) - \mu_0$ and $f$ is the generating function of the sequence $(\mu_n)_n$. 
	We then have
	\begin{align*}
		g'(1) = h'(h(1))h'(1) = h'(1-\mu(0)) = f'(1-\mu(0)) < 1
	\end{align*}
	since $f'(1) = 1$. Thus the $\nu$-process is subcritical.

	Since $\mu$ has finite exponential moments, we find that  $\nu$ also has finite exponential moments. Thus, the total population of $\mT^\nu$ (and hence also the size of the root segment of $\mT$) has finite exponential moments by~\cite[Thm. 18.1]{janson}.
\end{proof}

\begin{corollary}
	\label{co:blobbound}
	There are constants $C,c>0$ such that the probability for the maximal size  of a segment in $\mT_n$ to be larger than $x$ is bounded by $C n^{5/2} \exp(-cx)$ uniformly for all $n \ge 2$ and  $x >0$.
\end{corollary}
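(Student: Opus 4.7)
My plan is a first-moment argument. Since $\mT_n$ has $n$ leaves and contains no vertex of outdegree $1$, one has $|\mT_n| \le 2n - 1$ deterministically, so in particular the number of segments in $\mT_n^{\mu,\ast}$ is bounded by $2n$ (each segment has a unique red vertex as its root, and the number of red non-root vertices is at most the number of leaves). By linearity of expectation,
\begin{equation*}
\Pr{\max_\sigma |\sigma| > x} \le \sum_{v \in \cU} \Pr{v \in \mT_n^{\mu,\ast},\; v \text{ is a segment root},\; |\sigma_v| > x},
\end{equation*}
where $\sigma_v$ denotes the segment rooted at $v$, and the sum ranges over all positions in the Ulam--Harris tree.

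For each fixed $v$, I would decompose $\mT_n^{\mu,\ast}$ at $v$ into its complement (the path from the root to $v$ together with the off-path subtrees, accounting for $k$ leaves, say) and the subtree rooted at $v$ (accounting for $m := n - k$ leaves). Conditionally on the complement, the subtree at $v$ is distributed as an unconditioned planted BGW tree further conditioned on having $m$ leaves, and the event $\{v \text{ is a segment root},\; |\sigma_v| > x\}$ depends only on the top of this subtree. Using the standard local limit lower bound $\Pr{\#\mathrm{leaves}(\mT^\mu) = m} \gtrsim m^{-3/2}$---valid uniformly in $m \ge 1$ by criticality and finite exponential moments of $\mu$---together with Lemma~\ref{le:fem} applied to the root segment of the unconditioned tree, one obtains
\begin{equation*}
\Pr{v \text{ segment root},\; |\sigma_v| > x \;\big|\; v \in \mT_n^{\mu,\ast},\; \text{complement}} \le C m^{3/2} \cdot C_1 e^{-cx} \le C' n^{3/2} e^{-cx},
\end{equation*}
the last step using $m \le n$.

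Averaging over the complement, summing over $v$, and using $\sum_v \Pr{v \in \mT_n^{\mu,\ast}} = \Ex{|\mT_n^{\mu,\ast}|} \le 2n$, I arrive at
\begin{equation*}
\Pr{\max_\sigma |\sigma| > x} \le 2n \cdot C' n^{3/2} e^{-cx} = C'' n^{5/2} e^{-cx},
\end{equation*}
which is the desired bound. The hard part will be the careful verification of the absolute continuity estimate: one must ensure that the lower bound $\Pr{\#\mathrm{leaves}(\mT^\mu) = m} \gtrsim m^{-3/2}$ holds with a uniform constant for all $m \ge 1$, including small $m$ (where one relies on positivity of individual tree probabilities rather than the usual local limit asymptotic), and one should also track the minor dependence on the fair coin flip defining the root label of $\mT_n^{\mu,\ast}$ (contributing at most a factor $2$).
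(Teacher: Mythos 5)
Your argument is correct and rests on the same two ingredients as the paper's proof---the exponential tail of an unconditioned segment (Lemma~\ref{le:fem}) and the $n^{-3/2}$ local limit estimate \eqref{eq:partition} for the leaf count---combined with a union bound over order $n$ segments, so the exponent $5/2=1+3/2$ arises in exactly the same way. The one structural difference is where the conditioning on the leaf count is removed: the paper does it globally in a single step, bounding $\mathbb{P}(\max_i X_i>x\mid |\mT|=n)$ by $\mathbb{P}(|\mT|=n)^{-1}\sum_i \mathbb{P}(X_i>x)$ and then applying Lemma~\ref{le:fem} to the identically distributed segments of the \emph{unconditioned} tree, whereas you de-condition locally at each Ulam--Harris position via the fringe-subtree decomposition, paying $m^{3/2}\le n^{3/2}$ there. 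Your route works but carries the extra burden you flag at the end: you need $\mathbb{P}(\#\mathrm{leaves}(\mT)=m)\gtrsim m^{-3/2}$ uniformly over all $m\le n$ for which this probability is positive, and since the leaf count may be supported on a proper congruence class (e.g.\ the example with $\eta(0)=\eta(N)$, where the number of leaves is $\equiv 1 \bmod (N-1)$), the lower bound only holds along the admissible residues---which suffices, because for inadmissible $m$ the conditioning event is null, but this should be said explicitly alongside the small-$m$ positivity remark you already make. The paper's global version sidesteps all of this, since $\mathbb{P}(|\mT|=n)$ is only ever evaluated at the single value $n$ for which $\mT_n$ is defined.
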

\begin{proof}	
The probability  for $\mT$ to have $n$ leaves satisfies
	\begin{align}
		\label{eq:partition}
	\mathbb{P}\left(|\mT|=n\right)  \sim c_{\mathrm{cond}} n^{-3/2}
	\end{align}
	for some constant $c_{\mathrm{cond}}>0$, see for example~\cite{Kortchinva}. Let $(X_i)_i$ denote the sizes of the segments in $T$.
	There are at most $n$ segments in $\mT_n$, hence the probability for the maximal size  of a segment in $\mT_n$ to be larger than $x>0$ satisfies 	\begin{align*}
		\mathbb{P}\left(\max_i X_i > x~\Big|~ |\mT|=n\right) &= \frac{\mathbb{P}\left(\bigcup_i\{X_i> x\}, |\mT|=n\right) }{\mathbb{P}\left(|\mT|=n\right) } \\
		&\leq c_{\mathrm{cond}}^{-1} n^{3/2} \sum_{i=1}^n \mathbb{P}(X_i > x),
	\end{align*}
where $|T|$ denote the number of leaves in $T$. By Lemma~\ref{le:fem}, each segment of the unconditioned tree has a
finite exponential moment and the segments are identically
distributed, except possibly the root segment. Thus each term in the
sum is bounded by a constant times $e^{-cx}$, for some $c>0$, and the
result follows.  
\end{proof}

 In the infinite map $\mP$ corresponding to the tree $\hat{\mT}$, the
 blobs $B_0, B_1, \ldots$ corresponding to the spine segments are
 called \textit{spine blobs}. The spine blobs are independent and the
 non-root spine blobs $(B_i)_{i \ge 1}$ are identically
 distributed. For each $\ell \ge 1$, consider the graph distance
 $X_\ell$ from the  south pole of $B_0$ to the south pole of $B_\ell$.  

\medskip

\begin{lemma}
  \label{le:x}
  There exists a constant $\eta>0$ such that for each $\epsilon>0$ there are constants $C,c>0$ with 
  \[
    \Pr{|X_\ell - \eta \ell| > \epsilon \ell} \le C \exp(-c\ell)
  \]
  uniformly for all $\ell \ge 1$.
\end{lemma}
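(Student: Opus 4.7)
The plan is to express $X_\ell$ as an additive functional of an ergodic Markov chain driven by the i.i.d.\ spine blobs and then apply an exponential concentration inequality.

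The key structural observation is that for each $1\le i\le \ell$ the pair of pole vertices $\{s_i,n_i\}$ is a $2$-vertex cut of $\mP$, separating the sub-map arising from $B_0\cup\cdots\cup B_{i-1}$ (together with their non-spine fringes) from the one arising from $B_i\cup B_{i+1}\cup\cdots$ (with their fringes). In particular, any path from $s_0$ to $s_\ell$ must visit at least one vertex in each cut, yielding the deterministic lower bound $X_\ell\ge \ell$. Setting $Y_i := d_\mP(s_0,s_i)$ and $\tilde Y_i := d_\mP(s_0,n_i)$ so that $X_\ell = Y_\ell$, the cut property yields the recursion
\begin{align*}
Y_{i+1} &= \min(Y_i+\alpha_i,\ \tilde Y_i+\gamma_i),\\
\tilde Y_{i+1} &= \min(Y_i+\beta_i,\ \tilde Y_i+\delta_i),
\end{align*}
where $\alpha_i,\beta_i,\gamma_i,\delta_i$ are the four pairwise distances inside $B_i$, with its non-spine fringes expanded and the spine placeholder edge replaced by the full expansion of the downstream spine blobs $B_{i+1},B_{i+2},\ldots$, between the cuts $\{s_i,n_i\}$ and $\{s_{i+1},n_{i+1}\}$.

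Setting $D_i := \tilde Y_i - Y_i$, one checks that $(D_i)$ is a Markov chain driven by the stationary ergodic tail of the i.i.d.\ spine blobs. The pointwise bound $|D_i|\le d_\mP(s_i,n_i)$, together with stochastic domination of $d_\mP(s_i,n_i)$ by the diameter of the sub-map rooted in $B_i$, provides an exponential tail on $d_\mP(s_i,n_i)$ uniformly in $i$ via Corollary~\ref{co:blobbound} and the recursive strategy of Lemma~\ref{le:fem}. The symmetry of parallel blobs under swapping south and north poles ensures that the stationary distribution $\pi$ of the chain is symmetric about $0$ with exponential tails, and geometric ergodicity follows from a Foster--Lyapunov argument using the drift function $x\mapsto e^{\lambda|x|}$ for small $\lambda>0$. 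The increment $Y_{i+1}-Y_i=\min(\alpha_i,\,D_i+\gamma_i)$ is a measurable functional of $(D_i,\alpha_i,\gamma_i)$, bounded by $\alpha_i+|D_i|+\gamma_i$ and hence exponentially integrable, with stationary mean
\[
\eta := \mathbb{E}_\pi\bigl[\min(\alpha_1,\,D_1+\gamma_1)\bigr]\ge 1,
\]
the positivity coming for free from the pointwise bound $X_\ell\ge \ell$.

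Writing $Y_\ell = \sum_{i=0}^{\ell-1}(Y_{i+1}-Y_i)$ and invoking a standard concentration inequality for additive functionals of a geometrically ergodic Markov chain with exponentially integrable increments (e.g.\ via a regeneration decomposition at returns of $D_i$ to a fixed level, combined with Chernoff bounds between regeneration epochs), one obtains $\Prb{|X_\ell - \eta\ell| > \epsilon \ell} \le C \exp(-c \ell)$ with constants $C,c$ depending on $\epsilon$. The transient non-stationary contribution from the root blob $B_0$ is absorbed into the constants. The main obstacle will be the ergodic-theoretic step: establishing geometric ergodicity of $(D_i)$ and a uniform exponential tail on its stationary distribution $\pi$. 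This calls for a careful interplay between the $2$-cut structure, the symmetry of parallel blobs, and the exponential-moment control from Corollary~\ref{co:blobbound}, and is where most of the technical work will be concentrated.
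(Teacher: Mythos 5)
Your overall strategy --- viewing $X_\ell$ as the additive part of a Markov additive process driven by the i.i.d.\ spine blobs, with the driving chain recording the discrepancy between the distances to the two poles --- is exactly the paper's strategy. But there is a genuine gap, and it stems from a missed structural fact that the paper relies on: the two placeholder poles $s_i,n_i$ of every blob are \emph{joined by an edge of the map}. (Every red vertex is a $P$-vertex with at least one leaf child, and that leaf contributes a single edge placed in parallel, directly connecting the poles.) Consequently $|d_{\mP}(s_0,n_i)-d_{\mP}(s_0,s_i)|\le 1$, so your chain $D_i$ takes values in $\{-1,0,1\}$. You instead bound $|D_i|$ by $d_{\mP}(s_i,n_i)$ and treat it as an unbounded, merely exponentially integrable quantity, which forces you into the Foster--Lyapunov/geometric-ergodicity/regeneration machinery that you yourself flag as ``where most of the technical work will be concentrated'' and do not carry out. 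With the three-state driving chain, all of that evaporates: the hypotheses of the Iscoe--Ney--Nummelin large-deviation theorem for Markov additive processes are automatic for a finite driving chain, which is precisely how the paper concludes.

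The same missed fact also repairs a second problem in your setup. You define the within-blob distances $\alpha_i,\beta_i,\gamma_i,\delta_i$ ``with the spine placeholder edge replaced by the full expansion of the downstream spine blobs $B_{i+1},B_{i+2},\ldots$''; as written, the increment $Y_{i+1}-Y_i$ then depends on the future of the blob sequence, which destroys the Markov additive structure your concentration argument requires. Because $s_{i+1}$ and $n_{i+1}$ are adjacent, no geodesic needs to detour into the downstream part (any excursion entering and leaving through the $2$-cut $\{s_{i+1},n_{i+1}\}$ is replaced by that single edge), so the increment is a function of $B_i$ and $D_i$ alone --- but this needs to be said, and it is again the pole-adjacency that delivers it. Minor points: your deterministic bound $X_\ell\ge\ell$ is a reasonable way to get $\eta>0$ (modulo checking the cuts are disjoint), and the reduction from $X_\ell$ to the chain started at $B_1$ (absorbing the root blob $B_0$ via its exponential moments, as in Lemma~\ref{le:fem}) matches the paper.
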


\medskip

\begin{proof}
Consider the graph distance $Y_\ell$ from the south pole $v$ of $B_1$
to the south pole of $B_\ell$. This way, $|X_\ell - Y_\ell|$  is
bounded by the size of $B_0$, which has finite exponential moments by
Lemma~\ref{le:fem}. It hence suffices to show that there exists a constant 
$\eta>0$ such that for each $\epsilon>0$, there are constants $C,c>0$
such that for all $\ell \ge 1$ 
\begin{align}
	\label{eq:intermediate}
	\Pr{|Y_\ell - \eta \ell| > \epsilon \ell} \le C \exp(-c\ell).
\end{align}
Since the poles of $B_{\ell}$ are joined by an edge, 
the graph distances from $v$ to either  of them (north or south) 
differ by at most $1$.
Hence, the increment $Y_{\ell+1} - Y_{\ell}$ only depends on this
difference $S_\ell \in \{-1,0,1\}$ and $B_{\ell}$.  Thus $(Y_{\ell},
S_\ell )_{\ell \ge 1}$ is a Markov additive process with a driving
chain $(S_\ell)_{\ell\geq 1}$ whose state space has three
elements. Inequality~\eqref{eq:intermediate} now immediately follows
from a general large deviation result for Markov additive processes~\cite[Thm. 5.1]{IscNeyNum}
whose conditions are satisfied since the state space of the driving chain under consideration is finite~\cite[Rem. 3.5, Sec 7 (ii)]{IscNeyNum}. 
\end{proof}

\begin{remark}
We leave out  details concerning the exact transition probabilities of
the Markov additive process in the above proof but refer to the paper
by Curien, Haas and Kortchemski \cite{randissection} for a similar
situation.  In our case, the
increments of the additive component $(Y_\ell)_{\ell \geq 1}$ of the
chain are not as explicit as in \cite{randissection} 
(they are given in terms of distances between certain vertices in
a spine-blob) and therefore it is less useful to write them down.
\end{remark}

\bigskip

\begin{lemma}
  \label{le:z}
  Let $Z_\ell$ be the number of segments encountered on the spine from
  the root of $\hat{\mT}$ to the $(\ell +1)$st spine vertex.  
  There exists a constant $\kappa>0$ such that for each $\epsilon>0$
  there are constants $C,c>0$ such that for all $\ell \ge 1$ 
  \[
    \Pr{|Z_\ell - \kappa \ell| > \epsilon \ell} \le C \exp(-c\ell).
  \]
\end{lemma}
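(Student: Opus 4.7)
The plan is to recognize $Z_\ell$ as a sum of independent Bernoulli indicators along the spine and then apply a Chernoff--Hoeffding bound.

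First I would identify when a new segment begins on the spine. By the definition in Section~\ref{ss:bijection}, each segment is a maximal subtree rooted at a red vertex, whose other red vertices must be leaves of the segment. Since the spine vertices $v_1, v_2, \ldots$ of $\hat{\mT}$ are never leaves of the tree, whenever $v_j$ ($j\geq 1$) is red it must be the root of a new segment, and conversely no non-red spine vertex can begin a new segment. Together with the convention that the root itself begins a segment, this gives
\[
Z_\ell \;=\; 1 + \sum_{j=1}^{\ell} I_j, \qquad I_j := \mathbbm{1}\{v_j \text{ is red}\}.
\]

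Next I would argue that $(I_j)_{j\geq 1}$ are independent with a simple periodic law. The indicator $I_j$ depends only on the $P/S$ label of $v_j$ (determined by the parity of $j$ and the initial coin flip) and on whether at least one of the $\hat\xi_j - 1$ non-spine children of $v_j$ is a leaf of the tree. In the Kesten tree $\hat{\mT}$, the sequence $(\hat\xi_j)_j$ is i.i.d.~size-biased from $\mu$, the spine child is uniform among all children, and the non-spine children root independent copies of $\mT^\mu$; hence the data entering distinct $I_j$ are independent. For the parity giving label $P$, one computes
\[
\Pr{I_j = 1} \;=\; 1 - \sum_{k\geq 2} \hat\mu(k)(1-\mu(0))^{k-1} \;=:\; p \in (0,1),
\]
and $\Pr{I_j = 1} = 0$ for the other parity, which yields $\Ex{Z_\ell} = \kappa \ell + O(1)$ with $\kappa := p/2$.

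Since $Z_\ell - 1$ is a sum of $\ell$ independent $\{0,1\}$-valued variables, Hoeffding's inequality then yields
\[
\Pr{|Z_\ell - \kappa\ell| > \epsilon\ell} \;\leq\; C \exp(-c\epsilon^2 \ell)
\]
(absorbing the bounded correction and the finitely many small values of $\ell$ into the constants), which is exactly the claimed bound. The main obstacle is technical rather than probabilistic: one must check that the segment decomposition of Section~\ref{ss:bijection}, formulated there for finite planted trees, extends unambiguously to the local limit $\hat{\mT}$ so that the identification ``new segment $\Leftrightarrow$ red spine vertex'' remains valid. Once this bookkeeping is done the rest is a routine application of Chernoff--Hoeffding, and the fair coin flip for the root label only swaps the roles of even and odd parities.
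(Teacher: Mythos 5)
Your argument is correct, and it rests on the same underlying observation as the paper's proof --- namely that in $\hat{\mT}$ labelled by $P$ each even-generation spine vertex is independently red with a fixed probability $p=1-\sum_{k\ge 2}k\mu(k)(1-\mu(0))^{k-1}\in(0,1)$, while odd-generation spine vertices are never red --- but you exploit it in the dual way. The paper works with the arrival times $E_m$ (the spine position at which the $(m+1)$st segment begins), whose increments are i.i.d.\ and geometric, applies a medium-deviation inequality for light-tailed i.i.d.\ sums to $E_m$, and then inverts the resulting bound via the equivalence of $\{\ell\notin[E_{\lfloor\kappa\ell(1-\epsilon)\rfloor},E_{\lfloor\kappa\ell(1+\epsilon)\rfloor}]\}$ with $\{|Z_\ell-\kappa\ell|>\kappa\epsilon\ell\}$. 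You instead write the counting process directly as $Z_\ell=1+\sum_{j\le\ell}I_j$ with independent bounded indicators and apply Hoeffding. Your route is somewhat more elementary (bounded summands, no inversion step with its choice of $\delta$) and identifies $\kappa=p/2$ explicitly; the paper's route via inter-arrival times is the one that still works when the additive increments are unbounded, which is why the same machinery reappears in Lemma~\ref{le:x}, where the increments are genuinely not indicators. The bookkeeping points you flag are indeed fine: the colouring and segment decomposition are applied to $\hat{\mT}$ in the paper exactly as you describe, the root is red by convention, the independence across $j$ follows from the Kesten construction, and the $\pm1$ ambiguity at the endpoints together with the $O(1)$ gap between $\Ex{Z_\ell}$ and $\kappa\ell$ are absorbed into the constants.
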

\begin{proof}
  Let $E_m$ denote the number of edges on the spine we need to cross
  in order to get from the root vertex to the start of the $(m+1)$st
  segment. Set $E_0=0$. Hence the differences $E_m - E_{m-1}$ are
  independent for $\ell \ge 1$ and identically distributed for all $m
  \ge 2$. They have a geometric distribution since they may be found
  by flipping independent coins to check whether the next even vertex on
  the spine is a red vertex in which case the segment stops.  
			
We make use of the following  inequality given for example
in~\cite[Example 1.4]{MR3309619}: 
Let $S_n$ denote the sum of $n$ i.i.d.~real-valued centred random
variables with finite exponential moments. Then there are constants
$\lambda_0, c>0$ such that for all $n\in \ndN$, $x > 0$ and 
$0\le\lambda\le\lambda_0$ it holds that  
\begin{equation}\label{eq:medium}
  \Pr{|S_n| \ge x} \le 2 \exp(c n \lambda^2 - \lambda x).
\end{equation}
It follows that there exists $\kappa^{-1}>0$ such that for each
$\delta>0$ there exist constants $C,c>0$ with  
\[
  \Pr{|E_m - \kappa^{-1} m| > \delta m} \le C \exp(-c m),
\qquad\text{for all } m\geq 1.
\]
As $\delta>0$ is arbitrary, it follows by choosing
$\delta = \pm \left(1-\frac{1}{1\pm\epsilon}\right)\kappa^{-1}$ that
for any $\epsilon>0$ there exist constants $C', c'>0$ with 
\[
  \Pr{\ell \notin [ E_{\lfloor \kappa \ell (1-\epsilon) \rfloor}, 
E_{\lfloor \kappa \ell (1+\epsilon) \rfloor}] }\le C' \exp(-c'\ell).
\]
Since $\ell \notin [ E_{\lfloor \kappa \ell (1-\epsilon) \rfloor},
E_{\lfloor \kappa \ell (1+\epsilon) \rfloor}$ is equivalent to
$|Z_\ell - \kappa \ell| > \kappa \epsilon \ell$, the proof is
complete. 
\end{proof}

Before proceeding with the proof, we recall some facts about the
Gromov--Hausdorff metric  $d_{\mathrm{GH}}$ on the space of isometry
classes of compact metric spaces. Informally, two compact metric
spaces are close in the Gromov--Hausdorff metric $d_{\mathrm{GH}}$ if
they may be isometrically embedded in the third space $Z$ so that
their Hausdorff distance in $Z$ is small. Here, we will use a
representation of $d_{\mathrm{GH}}$ in terms of distortions of
correspondences, see e.g.~\cite{metricgeo}. A correspondence $\cR$
between two metric spaces  $(A,d_A)$ and $(B,d_B)$ is a subset
$\cR \subseteq A \times B$ such that for any $a \in A$ there is a $b \in B$
such that $(a,b)\in\cR$, and for any $b\in B$ there is an $a \in A$
such that $(a,b)\in \cR$. A distortion of a correspondence $\cR$ is
defined by 
\begin{align*}
	\dis(\cR) = \sup\{|d_A(a_1,a_2)-d_B(b_1,b_2)|~:~(a_1,b_1),(a_2,b_2)\in \cR\}.
\end{align*}
The Gromov--Hausdorff metric may then be written as
\begin{align*}
	d_{\mathrm{GH}}(A,B) = \frac{1}{2}	\inf _\cR \dis(\cR).
\end{align*}
By the main theorem of~\cite{Kortchinva,rizzolo}, there exists a
constant $c_{\mathrm{tree}}>0$ such that  
\begin{align}\label{eq:igko}
  (\mT_n, c_{\mathrm{tree}} n^{-1/2} d_{\mT_n}) \convdis 
(\cT_{\mathrm{e}}, d_{\cT_{\mathrm{e}}})
\end{align}
in the Gromov--Hausdorff topology, with $(\cT_{\mathrm{e}},
d_{\cT_{\mathrm{e}}})$ denoting Aldous' Brownian tree. 
The rest of the proof involves defining a suitable correspondence
between the spaces $\mP_n$ and $\mT_n$, and showing that the distortion
of that correspondence, with properly rescaled metrics, converges to 0
in probability.  

We define a correspondence $\mR_n$ between $\mP_n$ and $\mT_n$ as
follows. Note that there is a bijective relationship between blobs in
$\mP_n$ and segments in $\mT_n$. We let $(u,v) \in \mR_n$ if and only
if there is a blob in $\mP_n$ which contains $u$ and a corresponding
segment in $\mT_n$ which contains $v$.  
In that case we  write $u \sim v$. 

Let $\he_{\mP_n}(u)$ denote the graph distance in the map $\mP_n$ from
the south pole $\ast_0$ to $u$, and $\he_{\mT_n}(v)$ the height of the
vertex $v$ in $\mT_n$. The following lemma is the first step in
relating distances in $\mP_n$ to distances in $\mT_n$. 

\medskip

\begin{lemma}  \label{le:heightbound}
Let $\eta$ be the constant in Lemma \ref{le:x}, $\kappa$ the constant
in Lemma \ref{le:z}, and  let $\delta>0$ be arbitrary.   
With  probability tending to one
as $n \to \infty$, there exist no vertices $u \in\mP_n$
and $v \in \mT_n$ such that $u\sim v$ and
\begin{align}
  \label{eq:main}
  |\he_{\mP_n}(u) - \eta \kappa \he_{\mT_n}(v)| \ge \delta \sqrt{n}.
\end{align}
\end{lemma}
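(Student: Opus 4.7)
The plan is to establish a pointwise large-deviation estimate
\[
  \Pr{|\he_{\mP_n}(u)-\eta\kappa\,\he_{\mT_n}(v)|\ge \delta\sqrt n} \le C\exp(-c\sqrt n)
\]
uniformly over pairs $(u,v)$ with $u\sim v$, and then to apply a union bound over the at most $O(n\log n)$ such pairs, where the factor $\log n$ is a uniform upper bound on blob sizes coming from Corollary~\ref{co:blobbound}. The exponential decay easily beats the polynomial union-bound factor.

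First I would reduce the estimate for $u$ to one for the south pole of its blob. Let $w$ be the root of the segment of $\mT_n$ containing $v$, let $B$ be the blob of $\mP_n$ associated with this segment, and write $\bar X$ for the graph distance in $\mP_n$ from $\ast_0$ to the south pole of $B$. Then $|\he_{\mP_n}(u)-\bar X|\le \mathrm{diam}(B)$ and $|\he_{\mT_n}(v)-\he_{\mT_n}(w)|$ is bounded by the size of the segment containing $v$, both of which are $o(\sqrt n)$ uniformly in $v$ by Corollary~\ref{co:blobbound}. It therefore suffices to prove concentration of $\bar X$ around $\eta\kappa h_w$, where $h_w=\he_{\mT_n}(w)$. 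The ancestral path of $w$ in $\mT_n$ can be coupled, through the standard absolute-continuity relationship between the conditioned tree $\mT_n$ and the local limit $\hat{\mT}$ via the partition-function estimate $\Pr{|\mT|=n}\sim c_{\mathrm{cond}}\,n^{-3/2}$, with the first $h_w$ spine vertices of $\hat{\mT}$ at a polynomial cost $O(n^{3/2})$. Under this coupling $\bar X$ becomes $X_{Z_{h_w}}$ in the notation of Lemmas~\ref{le:x} and~\ref{le:z}.

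I would then split according to whether $h_w$ is large or small. For $h_w\ge \epsilon_0\sqrt n$, with $\epsilon_0$ chosen small in terms of $\delta$, Lemmas~\ref{le:x} and~\ref{le:z} give that the events $|Z_{h_w}-\kappa h_w|\le \epsilon h_w$ and $|X_{Z_{h_w}}-\eta Z_{h_w}|\le \epsilon Z_{h_w}$ each fail with probability at most $C\exp(-c\epsilon_0\sqrt n)$, which together with the $n^{3/2}$ absolute-continuity factor and the $O(n\log n)$ union bound still tends to zero. For $h_w<\epsilon_0\sqrt n$ I would argue that both $\eta\kappa h_w$ and $\he_{\mP_n}(u)$ are individually small: the former is at most $\eta\kappa\epsilon_0\sqrt n$, while $\he_{\mP_n}(u)\le\sum_{i=0}^{Z_{h_w}}\mathrm{diam}(B_i)+\mathrm{diam}(B)$ is at most a constant multiple of $\epsilon_0\sqrt n$ with high probability, because blob diameters have finite exponential moments by Lemma~\ref{le:fem} and a Bernstein-type inequality applies to these at most $h_w+1$ summands. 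Choosing $\epsilon_0$ small enough that both quantities are below $\delta\sqrt n/2$ handles this regime.

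The hard part will be justifying the coupling between the ancestral path of a fixed vertex in the conditioned tree $\mT_n$ and a prefix of the spine of $\hat{\mT}$, uniformly over all heights $h_w$. This is essentially the same absolute-continuity device already used to pass from Lemma~\ref{le:fem} to Corollary~\ref{co:blobbound}, but it now has to be combined with the Markov-additive large-deviation input of Lemma~\ref{le:x}; the $n^{3/2}$ partition-function cost is swamped by the $\exp(-c\sqrt n)$ decay. Once this bookkeeping is in place, the remainder is just combining the three ingredients Lemmas~\ref{le:x}, \ref{le:z} and Corollary~\ref{co:blobbound} with the polynomial union bound.
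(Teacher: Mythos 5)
Your proposal follows essentially the same route as the paper's proof: your ``absolute-continuity device'' is precisely the many-to-one identity $\mathbb{P}\bigl((\hat{\mT}^{(\ell)},U_\ell)=(\tau,u)\bigr)=\mathbb{P}(\mT=\tau)$ combined with the partition-function asymptotics $\mathbb{P}(|\mT|=n)\sim c_{\mathrm{cond}}n^{-3/2}$; the polynomial union-bound factor (the paper uses $2n$ vertices times $O(\sqrt n)$ heights rather than your $O(n\log n)$ pairs, an immaterial difference) is absorbed by the $\exp(-c\sqrt n)$ decay; and your split at $h_w=\epsilon_0\sqrt n$, with a Bernstein-type bound on a sum of light-tailed blob contributions below the threshold and Lemmas \ref{le:x} and \ref{le:z} above it, is exactly the paper's split at $h\sqrt n$.

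The one genuine loose end is in the large-height regime. Lemmas \ref{le:x} and \ref{le:z} deliver a \emph{multiplicative} error: on the good event you get $|X_{Z_{h_w}}-\eta\kappa h_w|\le C\epsilon h_w$, and this only yields the required \emph{additive} bound $\delta\sqrt n/2$ when $h_w=O(\sqrt n)$ with the implied constant controlled. Since vertices of $\mT_n$ can a priori sit at height of order $n$, taking $\epsilon$ small enough to cover all heights would force $\epsilon\to0$ with $n$ and destroy the constants in the lemmas. You therefore need to first discard the event $\He(\mT_n)>H\sqrt n$, which has probability at most $\epsilon$ for $H$ large by \eqref{eq:igko} (or a Gaussian height tail), and only then choose the $\epsilon$ of Lemmas \ref{le:x} and \ref{le:z} small in terms of $\delta$ and $H$; this truncation is exactly how the paper opens its proof. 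With that step inserted, the rest of your outline matches the paper's bookkeeping and goes through.
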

\begin{proof}
	Let $\epsilon>0$ be given. Let $H_n = H\sqrt{n}$ for some $H>0$. By~\eqref{eq:igko} we may choose $H$ sufficiently large such that the height $\He(\mT_n)$ satisfies
	\[
	\Pr{\He(\mT_n) > H_n} \le \epsilon
	\]
	for all $n$. 
	Let us call a vertex $v$ in a tree $\tau$  \emph{bad} if it corresponds to some vertex $u$ in the associated map $P$ such that inequality~\eqref{eq:main} holds (with obvious replacements of $\mP_n$ and $\mT_n$ by $P$ and $\tau$). Let $\cE(\tau)$ denote the property that a tree $\tau$ contains a bad vertex.
Let us denote the tip of the spine in $\hat \mT^{(\ell)}$ by $U_\ell$. 	Given any finite plane tree $\tau$ with a vertex $u$ having height $\ell$, it holds that 
\begin{align*}
	\mathbb{P}\left((\hat{\mT}^{(\ell)},U_\ell)=(\tau,u)\right) = \mathbb{P}\left(\mT= \tau\right)
\end{align*}
(see e.g.~the short derivation in  \cite[Appendix A3]{seni:2023}).
 Hence, using~\eqref{eq:partition} we obtain
	\begin{align*}
		\Pr{\cE} &\le \epsilon + \Pr{\cE(T_n), \He(\mT_n) \le H_n} \\
		&\le \epsilon +  \frac{2n }{\mathbb{P}(|\mT|=n)} \sum_{\ell =0}^{\lfloor H_n \rfloor} \Pr{U_\ell \text{ is bad}}  \\
		&= \epsilon + O(n^{5/2})  \sum_{\ell =0}^{\lfloor H_n \rfloor} \Pr{U_\ell \text{ is bad}}.
	\end{align*}
	The second inequality is explained as follows. We may write
	\begin{align*}
	&\Pr{\cE(T_n), \He(\mT_n) \le H \sqrt{n}}\mathbb{P}(|\mT|=n) = \sum_{\tau \in \cT} \one\{\cE(\tau),\He(\tau)\leq H_n,|\tau|=n\} \mathbb{P}(\mT= \tau) \\
	&\leq \sum_{\tau \in \cT, u\in \tau} \one\{u \text{ is bad},\He(\tau)\leq H_n,|\tau|=n\} \mathbb{P}(\mT= \tau) \\
	&= \sum_{\tau \in \cT, u\in \tau}\sum_{\ell=0}^{\He(\tau)} \one\{u \text{ is bad},\He(\tau)\leq H_n,|\tau|=n,\he_{\tau}(u)=\ell\} 	\mathbb{P}\left((\hat{\mT}^{(\ell)},U_\ell)=(\tau,u)\right) \\
	&\leq 2n \sum_{\ell = 0}^{\lfloor H_n \rfloor}\sum_{\tau \in \cT} \mathbb{P}\left(\hat{\mT}^{(\ell)} = \tau,U_\ell \text{ is bad}\right) = 2n \sum_{\ell = 0}^{\lfloor H_n \rfloor} \mathbb{P}\left(U_\ell \text{ is bad}\right).
	\end{align*}
		For the last inequality we used the fact that if a plane tree has $n$ leaves and no inner vertex with exactly one child then it has at most $2n$ vertices in total.

	If $u$ is a vertex in the map corresponding to $\hat{\mT}^{(\ell)}$ such that $u$ corresponds to the tip of the spine of $\hat{\mT}^{(\ell)}$, then the height of $u$ may be bounded by the total number of leaves in the root segments of the subtrees attached to the spine vertices of $\hat{\mT}^{(\ell)}$. Denote by $W_i$ the total size of all the root segments in the subtrees attached to the spine vertex at distance $i$ from the root, where $0 \leq i \leq \ell$. Note that they are independent and i.i.d~for $i<\ell$. At each of these spine vertices the total number of subtrees is distributed as $\hat \xi-1$ or $\hat \xi$ (at the tip of the spine) and is thus light-tailed.  Furthermore, the total size of the root segment of each of the subtrees is light-tailed by Lemma \ref{le:fem}. We conclude that $W_i$ is light-tailed for all $0 \leq i \leq \ell$. 
	
	If $\ell <\delta / (2 \eta\kappa)  \sqrt{n}$, then the event that $U_\ell$ is bad implies that the height of $u$ is larger than $\delta \sqrt{n}$, and therefore that
	\begin{align*}
	\sum_{i=0}^\ell W_i \geq \delta \sqrt{n}.
	\end{align*}
	  Since the $W_i$ are i.i.d~and light-tailed, it follows by the medium deviation inequality~\eqref{eq:medium} that if we take $h >0$ small enough then there exists constants $C_1, c_1>0$  such that uniformly for all $n$ and $1 \le \ell \le h\sqrt{n}$
	\begin{align}
		\Pr{U_\ell \text{ is bad}} \leq \mathbb{P}\left(\sum_{i=0}^{h\sqrt{n}} W_i \geq \delta \sqrt{n}\right)\le C_1 \exp(-c_1 \sqrt{n}).
	\end{align}
	This ensures
	\[
	O(n^{5/2})  \sum_{\ell =0}^{\lfloor h \sqrt{n} \rfloor} \Pr{U_\ell \text{ is bad}} = O(n^3) \exp(-c_1 \sqrt{n}) = \exp(-\Theta(\sqrt{n})).
	\]

	Furthermore, uniformly for $h \sqrt{n} \le \ell \le H \sqrt{n}$ we know by Lemma~\ref{le:z} that for any $c>0$ we have $|Z_\ell - \kappa \ell| \le c \ell$ with probability at least $1 - \exp(\Theta(\sqrt{n}))$, which allows us to apply Lemma~\ref{le:x} to obtain for any $c>0$ that $|X_{Z_\ell} - \eta \kappa \ell | \le c \ell$ again with probability at least $1 - \exp(\Theta(\sqrt{n}))$. Recall that we constructed $\hat{\mT}^{(\ell)}$ from $\hat{\mT}$ by replacing the descendants of the tip of the spine, therefore creating a  ``mixed'' segment that contains the tip of the spine. Hence, in the map corresponding to $\hat{\mT}^{(\ell)}$ any vertex corresponding to the tip of the spine has a height that differs from $X_{Z_\ell}$ at most by a summand that by Lemma~\ref{le:fem} has finite exponential moments. Thus, we obtain 
	\[
	O(n^{5/2})  \sum_{h \sqrt{n} \le \ell \le H \sqrt{n}} \Pr{U_\ell \text{ is bad}} = O(n^3) \exp(-\Theta(\sqrt{n})) = \exp(-\Theta(\sqrt{n})).
	\]
	In summary,
	\[
	\Pr{\cE} \le \epsilon + \exp(-\Theta(\sqrt{n})).
	\]
	Since $\epsilon>0$ was arbitrary this completes the proof.
	\end{proof}

We are now ready to prove our main theorems.

\medskip

\begin{proof}[Proof of Theorem~\ref{te:main1}]
By Corollary~\ref{co:blobbound} and Lemma~\ref{le:heightbound} we know
that, for some constant $C>0$  and some deterministic sequence
$t_n =o(\sqrt{n})$, we have with high probability that the largest blob in
$\mP_n$ has diameter at most $C \log n$, and whenever a vertex $u$ in
$\mP_n$ corresponds to a vertex $v$ in $\mT_n$, then  
	\begin{align}
		\label{eq:lca}
			|\he_{\mP_n}(u) - \eta \kappa \he_{\mT_n}(v)|  \le t_n.
	\end{align}
	Now, let $u_1, u_2$ be vertices in $\mP_n$ and let $v_1$ and $v_2$ be any vertices in $\mT_n$ such that $v_i$ corresponds to $u_i$ for $i=1,2$. If $v_3$ denotes the lowest common ancestor of $v_1$ and $v_2$ in $\mT_n$, then
	\[
		d_{\mT_n}(v_1, v_2) = \he_{\mT_n}(v_1) + \he_{\mT_n}(v_2) - 2\he_{\mT_n}(v_3).
	\]
	Any geodesic in $\mP_n$ between $u_1$ and $u_2$ must pass through a blob corresponding to $v_3$. Letting $u_3$ denote some vertex in this blob, it follows that
	\[
			d_{\mP_n}(u_1, u_2) = \he_{\mP_n}(u_1) + \he_{\mP_n}(u_2) - 2\he_{\mP_n}(u_3) + O(\log n).
	\]
	By Inequality~\eqref{eq:lca} it follows that
	\[
	d_{\mP_n}(u_1, u_2) = \eta \kappa d_{\mT_n}(v_1, v_2) + o(\sqrt{n}).
	\]
	By Equation~\eqref{eq:igko} it follows that
	\begin{align}
		(\mP_n, \frac{c_{\mathrm{tree}}}{\eta \kappa} n^{-1/2} d_{\mP_n}) \convdis (\cT_{\mathrm{e}}, d_{\cT_{\mathrm{e}}})
	\end{align}
	in the Gromov--Hausdorff topology.
\end{proof}

\begin{proof}[Proof of Theorem~\ref{te:main2}]
	Since the diameter is bounded by twice the height with respect to the origin of the root edge, it suffices to show that there exist $C,c>0$ with
\[
  \Pr{\He(\mP_n) > x\sqrt{n}} \le C \exp(-c x^2 ).
\]
By possibly adjusting the constants $C,c$, it suffices to show
this inequality for all $x \ge 1$. Furthermore, the left-hand
side equals to zero if $x \ge \sqrt{n}$. Hence, we only need to
treat the case $1 \le x \le \sqrt{n}$,
which we are now going to do.

We now use the same notation and argue analogously as in the proof of
Lemma~\ref{le:heightbound}, with the exception that we call a vertex $v$ in $\mT_n$ \emph{bad} if it corresponds to some vertex $u$ in $\mP_n$ such
its height in $\mP_n$ is larger than $x \sqrt{n}$. 	 Analogously,
we  define for any $\ell \ge 1$ whether the tip of the spine of
$\hat{\mT}^{(\ell)}$ is \emph{bad}, and we let $\cE$ denote the event
that $\mT_n$ contains a bad vertex.

By~\cite[Lem. 6.61]{stufler:2020} there exist $C_1, c_1>0$ with
\[
\Pr{\He(\mT_n) > y \sqrt{n}} \le C_1 \exp(-c_1 y^2 )
\]
for any $y>0$. Hence, it follows as in the proof of
Lemma~\ref{le:heightbound} that for any fixed $h>0$ 
\begin{align*}
			\Pr{\cE} &\le \Pr{\He(\mT_n) > (x/h) \sqrt{n}}+ \Pr{\cE, \He(\mT_n) \le (x/h) \sqrt{n}} \\
		&\le C_1 \exp(-c_1 x^2/h^2) +  O(n^{5/2}) \sum_{\ell =0}^{\lfloor (x/h) \sqrt{n} \rfloor} \Pr{U_\ell \text{ is bad}}.
\end{align*}
Furthermore, applying Inequality \eqref{eq:medium} as before
yields that there exist constants $\lambda_0, \mu_{\mathrm{blob}}, c_2>0$
such that for all $n\in \ndN$ and $0 \le\lambda\le\lambda_0$ it holds
that  
\begin{align*}
  \Pr{U_\ell \text{ is bad}} \le  
2 \exp(c \ell \lambda^2 - \lambda (x \sqrt{n} - \ell \mu_{\mathrm{blob}})).
\end{align*}
As $h>0$ was fixed but arbitrary, we may take $h$ large enough such
that $\mu_{\mathrm{blob}} / h < 1/2$. As $x \ge 1$, it follows that
uniformly for $\ell \le (x/h)\sqrt{n}$ 
\[
c \ell \lambda^2 - \lambda (x \sqrt{n} - \ell \mu_{\mathrm{blob}}) \le c \ell \lambda^2 - \lambda x \sqrt{n} /2 = -x \Theta(\sqrt{n}).
\]
Hence,
\[
\Pr{\He(\mP_n) > x\sqrt{n}} = \Pr{\cE} \le C_1 \exp(-c_1 x^2/h^2) + \exp(-x \Theta(\sqrt{n})).
\]
Since we assumed $1\leq x\leq \sqrt n$, it follows that 
\[
	\Pr{\He(\mP_n) > x\sqrt{n}}   \le C_3 \exp(-c_3 x^2)
\]
for some constants $C_3, c_3 >0$ that do not depend on $n$.
\end{proof}

\section{Conclusions and related classes}

Our results imply in particular a scaling limit of uniformly sampled $2$-connected rooted series-parallel maps. From a different viewpoint, it has been shown in \cite{BonBouFus} that rooted series-parallel maps are in bijection with permutations avoiding the patterns 2413 and 3142. As a side result, it was also shown that a bipolar planar map admits a unique bipolar orientation precisely when it is series-parallel. These observations link series-parallel maps to the large framework of separable permutations and their connections to the Liouville quantum gravity, which has enjoyed remarkable developments during the last few years. For example, it is conjectured in \cite{Bor23} that conformally embedded rooted series-parallel maps converge towards the 2-LQG quantum sphere. Therefore, we expect interesting research directions following from our work.

Apart from planar maps, we may also consider planar graphs subject to constraints. The terminology for graph classes is a bit at odds with the one used here for planar maps. The class of series-parallel graphs refers to (simple) graphs that do not admit the $K_4$ as a minor, which may very well be separable (that is, not $2$-connected). This class is an important example of a subcritical graph class and hence admits the Brownian tree as scaling limit by~\cite{Pana}.

\subsection*{Acknowledgement}  
The research is partially supported by the Icelandic Research Fund,
grant number  239736-051, and the Austrian Science Fund, grant number
10.55776/F1002. 
The research of JB was supported by Vetenskapsrådet, 
grant 2023-05002, and by \emph{Ruth och Nils Erik Stenbäcks stiftelse}. The research of JT was partially supported by the Emil Aaltonen Foundation, grant number 240215.
JB and S\"OS gratefully acknowledge hospitality at the University of
Vienna and the Technical University of Vienna at the start of this
project. 
JT acknowledges the Austrian Science Fund (FWF)
under grant P33083 (DOI 10.55776/P33083) for supporting their visit. He
also thanks William da Silva for introducing the reference
\cite{BonBouFus}. 

\appendix
\section{Local metrics and continuity} \label{ss:metric}

Recall the sets $\cT$ (finite plane trees with no vertices of
outdegree 1) and $\cT^\ast$ (trees in $\cT$ where the leftmost child
of the root is a leaf) from Section \ref{sec:planetrees}.
Denote by $\overline{\cT}$
the set of (possibly infinite)
plane trees which have  no vertices of outdegree 1,
and by $\cT_\infty = \overline{\cT}\setminus \cT$  the infinite trees.
Denote by $\overline{\cT}^\ast$  the set of trees in
$\overline{\cT}$ such that the leftmost
child of the root is a leaf and let $\cT^\ast_\infty =
\overline{\cT}^\ast \setminus \cT^\ast$.
As for $\cT^\ast$, we take trees in $\overline\cT^\ast$ to be
labelled so that vertices in even generations have label $P$ and
vertices in odd generations have label $S$.

We define  \emph{local metrics} and  \emph{local topologies} on
$\overline \cT$, $\overline \cT^\ast$ and $\SP$ as
follows. First, for any graph $G$ with a distinguished vertex $v$ we
let $B_r(G;v)$ 
denote the subgraph of $G$ spanned by
vertices at distance $\leq r$ from the vertex $v$.  
For two graphs $G$ and $G'$ belonging to some set of graphs $\cG$, we let
\begin{align*}
d_{\cG}(G,G')=\frac{1}{1+\max\{r\in\mathbb{N}: B_r(G;v)=B_r(G';v)\}}.
\end{align*}
The topology generated by this metric
is called the \emph{local topology} with respect to $v$. In the case
when $\cG$ is one of the sets $\overline \cT$, 
$\overline \cT^\ast$ we take $v=\varnothing$ and use the notation $d_\cT$ for
the metric in both cases. When $\mathcal{G}$ is the set $\SP$ we use
$v=\ast_0$.  

Using the local metric we can define \emph{infinite} series-parallel
maps.  Namely, we let $\overline \SP$ denote the completion of $\SP$
with respect to the metric $d_{\SP}$. Then
$\SP_\oo:=\overline\SP\setminus\SP$ is the set of infinite
series-parallel maps, which are formally equivalence classes of Cauchy
sequences of finite series-parallel maps. 
We now show how to extend the bijection $\varphi^\ast:\cT^\ast\to\SP$
of Section \ref{ss:bijection} to infinite trees and maps.

A \textit{spine} of an infinite tree is an infinite path
$(V(0),V(1),V(2),...)$ such that $V(0)=\varnothing$ and $V(i+1)$ is a
child of $V(i)$ for all $i\geq 0$. At every vertex $V(i)$ on the spine, there
is one subtree rooted on the left of the spine and another one on the
right of the spine, which we refer to as the left and right
\emph{outgrowths} of the spine.
Let $\hat{\cT}^\ast_\infty$ be the subset of
$\cT^\ast_\infty$ consisting of trees with a \emph{unique} spine,
and such that this unique spine has 
infinitely many outgrowths in odd generations on both the left and
the right.  

Now let $T\in \hat{\cT}^\ast_\infty$ with  unique spine 
$(V_{T}(0),V_{T}(1),V_{T}(2),...)$. 
We inductively define a sequence $(i_k,j_k)_{k\geq1}$ of pairs of 
\emph{odd}
integers $i_k\leq j_k$ as follows, see Figure \ref{f:RL-pairs} for an
illustration.  First, $i_1$ is the smallest odd
integer $i$ so that $V_T(i)$ has a non-empty
outgrowth on the right of the spine, and $j_1$ is the smallest odd
integer $j\geq i_1$ so that $V_T(j)$ has a non-empty
outgrowth on the left of the spine.  Inductively,
 $i_{k+1}$ is the smallest odd
integer $i>j_k$ so that $V_T(i)$ has a non-empty
outgrowth on the right of the spine, and $j_{k+1}$ is the smallest
odd integer $j\geq i_{k+1}$ so that $V_T(j)$ has a non-empty
outgrown on the left of the spine.  
We call $(V_T(i_k),V_T(j_k))_{k\geq1}$ the
\emph{RL-pairs} of $T$.  
Since $i_k,j_k$ are odd, $V_T(i_k),V_T(j_k)$ are labelled by $S$. 
Note that $\hat{\cT}^\ast_\infty$ is defined
so that $T$ has infinitely many RL-pairs.

\begin{figure}
\begin{center}
  \includegraphics[width=0.85\linewidth]{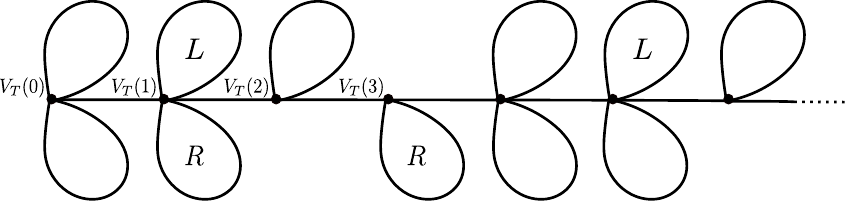}
\end{center}
\caption{An infinite tree $T\in \hat{\cT}^\ast_\infty$ 
  with its first two RL-pairs.}\label{f:RL-pairs}
\end{figure}

\medskip

\begin{proposition}
  Let $[T]_m := B_m(T;\varnothing)$ denote the first 
$m$ generations of  $T$.
\begin{enumerate}\label{prop:cont}
\item[(a)] If $T \in \hat{\cT}^\ast_\oo$ then 
$\varphi^\ast([T]_m)$ is a Cauchy sequence with respect to 
$d_\SP$.  Thus
$\varphi^\ast(T):= \lim_{m\rightarrow \infty}
\varphi^\ast(\left[T\right]_{m})$
gives a well-defined function 
$\cT^\ast\cup\hat{\cT}^\ast_\infty \to \overline\SP$.
\item[(b)] If a sequence $(T_{n})_{n\geq1}$ in 
$\cT^\ast\cup  \hat{\cT}^\ast_\oo$ 
converges to  $T\in \cT^\ast\cup  \hat{\cT}^\ast_\oo$ 
with respect to the metric $d_\cT$
then $\varphi^\ast(T_{n}) \rightarrow \varphi^\ast(T)$ with respect to 
$d_\SP$.
	\end{enumerate}
\end{proposition}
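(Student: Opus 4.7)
Both parts will follow from a \emph{local stabilization principle}: for any $T' \in \cT^\ast \cup \hat{\cT}^\ast_\infty$ and any $r \geq 0$, the ball $B_r(\varphi^\ast(T'); \ast_0)$ is determined by the combinatorial structure of $[T']_{D}$ for some explicit depth $D = D(r, T')$ depending only on $T'$ and $r$. Granting this, part (a) reduces to observing that $B_r(\varphi^\ast([T]_m); \ast_0)$ is constant for $m \geq D$, and part (b) reduces to the following: for $n$ large enough that $[T_n]_{D} = [T]_{D}$, the same $D$ works for $T_n$ and the corresponding balls coincide.

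The main tool is a \emph{replacement lemma}: if $M$ is a finite $2$-connected SP-map and $e = (u, v) \in M$ has $d_M(\ast_0, u), d_M(\ast_0, v) > r$, then substituting any $2$-connected SP-network for $e$ (with its poles identified to $u, v$) leaves $B_r(M; \ast_0)$ unchanged. Any geodesic from $\ast_0$ to a vertex within distance $r$ cannot traverse $u$ or $v$ and hence avoids the substituted region; conversely, any alternative path through the new subnetwork must enter at $u$ or $v$, which remain at distance $> r$ after substitution (a short case analysis comparing old and new distances to $u, v$). The passage from $[T]_m$ to $[T]_{m+1}$ is precisely a union of such substitutions, one at each leaf of $[T]_m$ that is internal in $T$. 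It therefore suffices to prove that for $m \geq D$ every such leaf's corresponding edge in $\varphi^\ast([T]_m)$ has both endpoints at distance $> r$ from $\ast_0$.

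To verify this I trace the south/north poles $(s_k, n_k)$ of the subnetwork rooted at the $k$-th spine vertex $V_T(k)$, starting from $(s_0, n_0) = (\ast_0, \ast_1)$. They are inherited at $P$-labelled spine vertices (even $k$) and may shift to interior junction points of a series composition at $S$-labelled vertices (odd $k$) precisely when left (resp.\ right) outgrowths are present. Since interior junctions of a subnetwork are only accessible from outside through the subnetwork's own poles, each shift contributes at least $1$ to the distance from $\ast_0$. A careful bookkeeping---accounting for $s$ and $n$ shifting at different depths---shows that passing through the $k$-th RL-pair $(i_k, j_k)$ increases $\min(d(\ast_0, s_k), d(\ast_0, n_k))$ by at least $1$. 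Since $T \in \hat{\cT}^\ast_\infty$ has infinitely many RL-pairs, this minimum tends to $\infty$, handling spine leaves directly (whose associated edge has endpoints $s_m, n_m$). For off-spine internal vertices of $T$ at depth $m$: uniqueness of the spine makes every outgrowth finite, so for $m$ large the attaching spine depth $q$ of the outgrowth containing the vertex must itself be large (outgrowths at shallow spine depths have bounded height). The endpoints of the associated edge lie in that outgrowth's subnetwork, which is accessible only through poles---either $s_q, n_q$ or neighbouring junctions in the series at $V_T(q)$---whose distances from $\ast_0$ are controlled by the preceding analysis.

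The main obstacle I anticipate is the distance accounting at each shift: one must verify that interior junctions remain at distance $\geq \min(d(\ast_0, s_k), d(\ast_0, n_k)) + 1$ \emph{in the full map} (not merely within the subnetwork), ruling out potential shortcuts through the complementary network that would invalidate the $+1$ increment. Once this is established, part (b) follows at once: the depth $D$ depends only on the RL-pair positions and outgrowth sizes contained in $[T]_{D}$, which are unchanged when $[T_n]_{D} = [T]_{D}$, giving $B_r(\varphi^\ast(T_n); \ast_0) = B_r(\varphi^\ast(T); \ast_0)$ and hence $d_\SP(\varphi^\ast(T_n), \varphi^\ast(T)) \to 0$.
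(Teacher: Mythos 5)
Your proposal follows essentially the same route as the paper: the paper's proof rests precisely on your key observation, namely that the edge corresponding to a vertex beyond the first $r$ RL-pairs lies at distance at least $r$ from both poles because the outgrowths of each RL-pair insert non-empty networks in series between that edge and $\ast_0$, $\ast_1$, each contributing at least one to the distance, and then both (a) and (b) follow from the resulting stabilization of $r$-balls along $[T]_m$ (your replacement lemma). Your explicit handling of the ``no shortcuts through the complementary network'' issue is carried out in more detail than the paper's one-sentence assertion, but it is the same argument.
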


\medskip

\begin{proof}
The key observation is that if
$T \in \hat{\cT}^\ast_\infty$ and $v\in T$ is a leaf  that is
beyond the first  $r$  RL-pairs, then for any $m$ large enough such that
$v\in[T]_m$, the edge $\mathbf{e}_v$ corresponding to $v$
in $\varphi^\ast([T]_m)$
is at least at distance $r$
from both poles $\south$, $\north$.
This follows from the recursive description of $\varphi^\ast$, 
where the non-empty outgrowths of each RL-pair become non-empty
networks placed in series on either side between $\mathbf{e}_v$ and the poles
$\south$, $\north$. Each of these networks contributes to the
distance from the poles to $\mathbf{e}_v$ at least by one, and to the total distance at
least by $r$.
Due to this observation, the proof can be completed as follows.
\begin{enumerate}
\item[(a)] If $[T]_m$ contains the first $r+1$ RL-pairs for
  all $m >m_{0}(r)$ then $\forall m,m^{\prime}>m_{0}(r)$,
  $B_r(\varphi^\ast([T]_m)=B_r(\varphi^\ast([T]_{m'}))$ since all the
  edges beyond the first $r+1$ RL-pairs do not belong to
  $B_r(\varphi^\ast([T]_m))$.  
\item[(b)] Let $(T_{n})_{n\geq1}$ be a sequence in $\overline{\cT}^\ast$
  converging to $T \in \cT^\ast \cup \hat{\cT}^\ast_\oo$. If $T \in
  \cT^\ast$ then $T_{n}=T$ for every $n$ large enough so that
  $\varphi^\ast(T_{n}) \rightarrow \varphi^\ast(T)$ is obvious. Now we
  assume that $T \in \hat{\cT}^\ast_\oo$. Let $R$ be such that $[T]_R$
  contains the first $r+1$ RL-pairs. There exists a number
  $n_{0}(R)$ so that for some $n>n_{0}(R)$, $[T_{n}]_R= [T]_R$. For
  such $n$ it holds that
  $B_r(\varphi^\ast([T_{n}]_R))=B_r(\varphi^\ast([T]_R))$.\qedhere
\end{enumerate}
\end{proof}

Since the local limit $\hat{\mT}$ of the trees $\mT_n$ almost surely
belongs to $\hat\cT^*_\infty$, it follows that the local limit 
$\mSP_\oo=\varphi^\ast(\hat\mT)$ of $\mSP_n$ is a well-defined
element of $\SP_\infty$.

\end{document}